\theoremstyle{definition}
\theoremstyle{remark}
\theoremstyle{corollary}
\theoremstyle{theorem}
\theoremstyle{corollary}
\newtheorem{theorem}{Theorem}[section]
\newtheorem{lemma}[theorem]{Lemma}
\newtheorem{proposition}[theorem]{Proposition}
\newtheorem{conjecture}[theorem]{Conjecture}
\theoremstyle{corollary}
\theoremstyle{definition}
\theoremstyle{remark}
\newtheorem{remark}[theorem]{Remark}
\numberwithin{equation}{section}
\newcommand{\C}{\mathbf{C}}
\newcommand{\Z}{\mathbf{Z}}
\newcommand{\Q}{\mathbf{Q}}
\newcommand{\sO}{\mathscr{O}}
\def\nP{\mathbf{P}}
\def\Spec{\operatorname{Spec}}
\def\Sing{\operatorname{Sing}}
\def\Supp{\operatorname{Supp}}
\def\vol{\operatorname{vol}}
\def\lct{\operatorname{lct}}
\def\ord{\operatorname{ord}}
\def\Aut{\operatorname{Aut}}
\def\PGL{\operatorname{PGL}}
\newcommand{\suchthat}{\;\ifnum\currentgrouptype=16 \middle\fi|\;}
\title[K-polystability of secant varieties of rational curves]{K-polystability of the first secant varieties\\ of rational normal curves}
\begin{document}

\author{In-Kyun Kim}
\address{June E Huh Center for Mathematical Challenges, KIAS, 85 Hoegiro Dongdaemun-gu, Seoul 02455, Republic of Korea}
\email{soulcraw@gmail.com}

\author{Jinhyung Park}
\address{Department of Mathematical Sciences, KAIST, 291 Daehak-ro, Yuseong-gu, Daejeon 34141, Republic of Korea}
\email{parkjh13@kaist.ac.kr}

\author{Joonyeong Won}
\address{Department of Mathematics, Ewha Womans University, 52 Ewhayeodae-gil, Seodaemun-gu, Seoul 03760, Republic of Korea}
\email{leonwon@ewha.ac.kr}

\date{\today}
\subjclass[2020]{14N07, 14J45}
\keywords{secant variety, Fano variety, K-stability, polar cylinder}

\thanks{The authors were partially supported by the National Research Foundation (NRF) funded by the Korea government (MSIT) (NRF-2022M3C1C8094326).}


\begin{abstract}
The first secant variety $\Sigma$ of a rational normal curve of degree $d \geq 3$ is known to be a $\Q$-Fano threefold. In this paper, we prove that $\Sigma$ is K-polystable, and hence, $\Sigma$ admits a weak K\"{a}hler--Einstein metric. We also show that there exists a $(-K_{\Sigma})$-polar cylinder in $\Sigma$.
\end{abstract}

\maketitle


\section{Introduction}
The secant varieties of projective varieties have attracted considerable attention in the last three decades partly because they have found important applications to algebraic statistics and algebraic complexity theory. In recent years, there has been a great deal of progress on the syzygies of secant varieties of smooth projective curves (\cite{CKP}, \cite{ENP1}). In order to approach algebraic properties such as syzygies through geometric methods, it is very important to control the singularity of the secant varieties. For this purpose, Ein--Niu--Park \cite[Theorem 1.1]{ENP1} established that the secant varieties of smooth projective curves of large degree have normal Cohen--Macaulay Du Bois singularities. Furthermore, it was also shown that the secant varieties of rational normal curves are $\Q$-Fano varieties. It is an interesting problem to study the singularities of pluri-anticanonical linear systems of secant varieties of rational normal curves from a birational geometry viewpoint. In this paper, we focus on the first secant varieties.

\medskip

Throughout the paper, we work over the field $\C$ of complex numbers. Let
$$
C :=v_d(\nP^1) \subseteq \nP H^0(\nP^1, \sO_{\nP^1}(d)) = \nP^d
$$
be a rational normal curve of degree $d \geq 3$, and
$$
\Sigma:=\Sigma_1(\nP^1, \sO_{\nP^1}(d)) \subseteq  \nP^d
$$
be the \emph{first secant variety} of $C$ in $\nP^d$, which is the closure of the union of secant lines to $C$ in $\nP^d$. Note that $\Sigma$ is precisely the union of bi-secant and tangent lines to $C$ in $\nP^d$. If $d=3$, then $\Sigma = \nP^3$. Assume henceforth that $d \geq 4$. Then
$$
\dim \Sigma = 3~~\text{ and }~~\Sing \Sigma = C.
$$ 
Recall from \cite[Theorem 1.1]{ENP1} that $\Sigma$ is a $\Q$-factorial normal projective variety of Picard number 1 such that $\Sigma$ has log terminal singularities and $-K_{\Sigma}$ is ample. Let $H$ be a hyperplane section of $\Sigma$ in $\nP^d$. We have
$$
-K_{\Sigma} \sim_{\Q} \frac{4}{d-2}H~~\text{ and }~~(-K_{\Sigma})^3 = 32\frac{(d-1)}{(d-2)^2}.
$$
Let $T$ be the \emph{tangent developable surface} of $C$ in $\nP^d$, which is the union of the tangent lines to $C$ in $\nP^d$. We have $\Sing T = C$.
A general hyperplane section of $T$ is a canonically embedded $d$-cuspidal rational curve of degree $2d-2$, which is a degeneration of a general canonical curve of genus $d$. It has long been known that generic Green's conjecture on syzygies of canonical curves \cite[Conjecture 5.1]{Green} can be proven using tangent developable surfaces of rational normal curves (see \cite{EL}). The syzygies of tangent developable surfaces of rational normal curves were completely determined by Aprodu--Farkas--Papadima--Raicu--Weyman \cite{AFPRW}. This gives an alternative proof of generic Green's conjecture, which was first shown by Voisin  \cite{Voisin1, Voisin2}. In \cite{Park}, based on the observation that
$$
T \in |-K_{\Sigma}|,
$$ 
the second author of the present paper provided a simple proof of the aforementioned result of \cite{AFPRW} by presenting a new geometric approach to syzygies of tangent developable surfaces. 

\medskip

First, we study K-polystability of secant varieties of rational normal curves. There has been a great deal of intensive research on the existence of K\"{a}hler--Einstein metrics on Fano varieties, which is a central topic in differential geometry and complex geometry (e.g.,  \cite{CalabiProblem}, \cite{CS}, \cite{Donaldson2}, \cite{Fujita1, Fujita2}, \cite{Kim2018, Kim2021b, Kim2023a, Kim2022, Kim2023, Kim2021a}, \cite{Tian1}). The problem classifying K\"{a}hler--Einstein Fano manifolds, known as the Calabi problem, was completely resolved by Tian \cite{Tian1} for del Pezzo surfaces, and there has been a great progress for smooth Fano threefolds (see \cite{CalabiProblem}). The concept of K-stability was first introduced by Tian \cite{Tian2} and algebraically reformulated by Donaldson \cite{Donaldson1} to give a characterization of the existence of K\"{a}hler--Einstein metric. The famous Yau--Tian--Donaldson conjecture predicts that if $X$ is a Fano variety, then $X$ admits a (weak) \emph{K\"{a}hler--Einstein metric} if and only if $X$ is \emph{K-polystable}. This conjecture was eventually verified in \cite{CDS}, \cite{Tian3} when $X$ is smooth and in \cite{Berman}, \cite{Li}, \cite{LWX}, \cite{LXZ} when $X$ is singular. Despite recent tremendous advances in the field of K-stability, it remains a quite challenging problem to determine whether an explicit Fano variety is K-polystable. We remark that the notion of K-stability also plays a crucial role in moduli theory of Fano varieties. We refer to  \cite{Xu} for a survey of an algebro-geometric aspect of K-stability.

\medskip

It is a classical result that $\nP^3$ admits a K\"{a}hler--Einstein metric. Thus $\Sigma_1(\nP^1, \sO_{\nP^1}(3))=\nP^3$ is K-polystable. Notice that $\Sigma=\Sigma_1(\nP^1, \sO_{\nP^1}(4))$ is a cubic threefold in $\nP^4$ projectively equivalent to $V(x_0x_1x_2+x_3^3+x_4^3) \subseteq \nP^4$, which is K-polystable by \cite[Corollary 1.2]{LX}. The first main result of the paper shows that every first secant variety of a rational normal curve is K-polystable.

\begin{theorem}\label{thm:main1}
Let $\Sigma$ be the first secant variety of a rational normal curve of degree $d \geq 3$. Then $\Sigma$ is K-polystable. Consequently, $\Sigma$ admits a weak K\"{a}hler--Einstein metric.
\end{theorem}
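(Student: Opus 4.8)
The plan is to establish K-polystability of $\Sigma$ by combining the valuative criterion of Fujita--Li with a study of the symmetries of $\Sigma$. Since $\Sigma$ is a $\Q$-Fano threefold of Picard number one, it suffices to show that the stability threshold $\delta(\Sigma) \geq 1$, together with a check that no destabilizing test configuration arises from the one-parameter subgroups of the automorphism group; equivalently, following the standard strategy for highly symmetric Fano varieties, I would show that $\Sigma$ admits a large reductive group of automorphisms and that every $\Aut(\Sigma)^\circ$-invariant valuation (or prime divisor over $\Sigma$) has beta-invariant $\beta(F) \geq 0$, with equality forcing $F$ to be induced by a one-parameter subgroup of the torus in $\Aut(\Sigma)$. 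Because $C = v_d(\nP^1)$ is homogeneous under $\PGL_2$ acting on $\nP^1$, and $\Sigma$ is built canonically from $C$, the group $\PGL_2$ acts on $\Sigma$; I expect $\Aut(\Sigma)^\circ \cong \PGL_2$ (or contains it as the relevant reductive part), so the only invariant valuations to test are those coming from the unique $\PGL_2$-invariant structures — namely $C$ itself, the tangent developable $T$, and the divisorial valuations centered there.

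Concretely, the key steps are as follows. First, I would pin down $\Aut(\Sigma)^\circ$ and catalog the $\PGL_2$-orbits on $\Sigma$: the curve $C$ (the singular locus), the complement $T \setminus C$ inside the invariant anticanonical divisor $T \in |-K_\Sigma|$, and the open dense orbit $\Sigma \setminus T$. Second, using $T \in |-K_\Sigma|$ and the log terminal property established in \cite[Theorem 1.1]{ENP1}, I would analyze the pair $(\Sigma, T)$ and compute its log canonical threshold along each orbit, aiming to show $(\Sigma, T)$ is log canonical, or better, that $\operatorname{lct}(\Sigma; T) = 1$ is achieved only along $C$. Third, for a $\PGL_2$-invariant prime divisor $F$ over $\Sigma$ — which must have center $C$ — I would compute $A_\Sigma(F)$ and the expected vanishing order $S(F) = \frac{1}{(-K_\Sigma)^3}\int_0^\infty \vol(-K_\Sigma - tF)\, dt$ using the intersection numbers $-K_\Sigma \sim_\Q \frac{4}{d-2}H$ and $(-K_\Sigma)^3 = 32\frac{d-1}{(d-2)^2}$, via an explicit resolution of $\Sigma$ along $C$ (for instance the blowup resolving $\Sigma$, whose exceptional behavior over $C$ is understood from the secant construction). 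The goal is $\beta(F) = A_\Sigma(F) - S(F) \geq 0$ for all such $F$, with the boundary case analyzed by hand.

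The main obstacle I anticipate is the volume computation $\vol(-K_\Sigma - tF)$ as a function of $t$: because $\Sigma$ is singular along $C$ and $F$ is a divisor over a resolution, one needs a sufficiently explicit model of $\Sigma$ (and of how $H$ and $K_\Sigma$ pull back) to make the integral tractable, and the Zariski-decomposition behavior of $-K_\Sigma - tF$ may change as $t$ increases, requiring a piecewise analysis. A clean way around this is to use the known projective model of $\Sigma$ — it is the image of $\nP(\sO_{\nP^1} \oplus \sO_{\nP^1}(-1)\oplus\sO_{\nP^1}(-1))$ or of a $\nP^1$-bundle / symmetric-square construction over $\nP^1$ — under which the relevant divisor classes and the resolution $\widetilde{\Sigma} \to \Sigma$ become toric-over-$\nP^1$, so the volumes reduce to areas of explicit polygons depending on $d$. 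Then $\beta \geq 0$ becomes a single inequality in $d$, and polystability (not just semistability) follows because equality is attained only by $\PGL_2$-equivariant divisors, which are accounted for by the reductivity of $\Aut(\Sigma)$.

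Finally, once K-polystability is proved, the statement that $\Sigma$ admits a weak K\"ahler--Einstein metric is immediate from the singular Yau--Tian--Donaldson correspondence \cite{Berman, Li, LWX, LXZ}, since $\Sigma$ is a $\Q$-Fano variety. The cases $d = 3$ ($\Sigma = \nP^3$) and $d = 4$ (the cubic threefold $V(x_0x_1x_2 + x_3^3 + x_4^3)$, handled by \cite{LX}) serve as consistency checks and base cases, and I would expect the general argument to specialize correctly to them.
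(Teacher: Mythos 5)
Your overall skeleton agrees with the paper: identify $\Aut(\Sigma)=\PGL(2,\C)$, classify the orbits as $C$, $T\setminus C$, $\Sigma\setminus T$, reduce K-polystability to an equivariant valuative check on $\PGL_2$-invariant divisors (the paper uses Zhuang's criterion, Theorem \ref{thm:Zhuang}, which demands the \emph{strict} inequality $S_\Sigma(F)<A_\Sigma(F)$ for every invariant $F$), and deduce the weak K\"ahler--Einstein metric from the singular Yau--Tian--Donaldson correspondence. However, there are genuine gaps in how you propose to carry out the valuative check. First, the invariant divisors over $\Sigma$ with center $C$ form an infinite family (weighted/quasi-monomial blow-ups along $C$ and along the intersection loci appearing after blowing up), so your plan to ``compute $\vol(-K_\Sigma - tF)$ via an explicit resolution'' divisor by divisor is not a finite procedure; one needs a uniform estimate. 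The paper supplies exactly this through a variant of Fujita's result (Proposition \ref{prop:Fujita}), whose two inputs are (i) the value of $\lct(\Sigma,T)$ at the generic point of $C$ (Lemma \ref{lem:lct}) and (ii) the uniform bound $\ord_F D\le A_\Sigma(F)$ for every effective $D\sim_\Q -K_\Sigma$ with $T\not\subseteq\Supp(D)$ (Lemma \ref{lem:ord_FD}); the latter is the crucial new ingredient and has no counterpart in your proposal. Note also that your hoped-for statement that $(\Sigma,T)$ is log canonical is false: $\lct(\Sigma,T)=(d+2)/(2d)<1$, and worse, $\alpha(\Sigma)\le 1/2$ (Remark \ref{rem:alpha}) and $\alpha_{\Aut(\Sigma)}(\Sigma)=(d+2)/(2d)\le 3/4$ for $d\ge4$ (Remark \ref{rem:G-alpha}), so neither lc-ness of anticanonical divisors nor any alpha-invariant shortcut is available; the whole point of Lemma \ref{lem:ord_FD} is that the vanishing order along $C$ behaves as if the pairs were log canonical even though they need not be.

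Two further points. Your geometric model is off: the resolution of $\Sigma$ is the blow-up along $C$, which is $\nP(E)\to\nP^2$ for a rank-two bundle $E$ on $\nP^2$ (the Hilbert scheme of two points on $\nP^1$), not a $\nP^1$-bundle over $\nP^1$ of the form $\nP(\sO\oplus\sO(-1)\oplus\sO(-1))$; and since $\Aut(\Sigma)=\PGL(2,\C)$ contains no three-dimensional torus, $\Sigma$ is not toric, so the ``volumes as areas of polygons'' shortcut is unavailable. Finally, the polystability logic needs tightening: $\delta(\Sigma)=1$, so proving $\delta\ge 1$ (equivalently $\beta\ge 0$ for all divisors) only gives K-semistability, and an equality analysis ``forced to come from a one-parameter subgroup of the torus'' is not a criterion you can invoke as stated; what the paper actually uses is that strict inequality $S_\Sigma(F)<A_\Sigma(F)$ for all $\Aut(\Sigma)$-invariant divisors $F$ already implies K-polystability by Zhuang's theorem, and this strict inequality is what Lemmas \ref{lem:A(T)S(T)}, \ref{lem:lct} and \ref{lem:ord_FD} combine to give via Proposition \ref{prop:Fujita}.
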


Recall from \cite[Theorem 1.6]{LXZ} that $\Sigma$ is K-polystable if and only if $\Sigma$ admits a weak K\"{a}hler--Einstein metric. An important ingredient of the proof of Theorem \ref{thm:main1} is that if $d \geq 4$, then
$$
\Aut(\Sigma) = \Aut(\nP^1)= \PGL(2, \C)~~\text{ (Proposition \ref{prop:Aut(Sigma)})}.
$$ 
As $\Aut(\Sigma)$ is infinite, $\Sigma$ cannot be K-stable by \cite[Corollary 1.3]{BX}. This means that it is almost impossible to check the K-polystability of $\Sigma$ by estimating the delta invariant $\delta(\Sigma)$ as was done in many recent studies in the field of K-stability. Recall from \cite[Theorem B]{BJ} (see also \cite{FO}) that a Fano variety is K-stable (resp. K-semistable) if and only if $\delta(X) > 1$ (resp. $\delta(X) \geq 1$). Granting Theorem \ref{thm:main1}, we obtain
$$
\delta(\Sigma) = 1.
$$
There is no way to capture K-polystability of $\Sigma$ by only estimating $\delta(\Sigma)$. Instead, one may use $\Aut(\Sigma)$-equivariant alpha invariant $\alpha_{\Aut(\Sigma)}(\Sigma)$. It was shown by Tian \cite{Tian0} that if $G$ is a reductive algebraic group acting on a Fano variety $X$ of dimension $n$ and $\alpha_G(X) > n/(n+1)$, then $X$ is K-polystable (see \cite[Corollary 4.15]{Zhuang}). This approach was very successful to prove that Fano threefolds of degree 22 with infinite automorphism group are K-polystable (see \cite{CS}, \cite{Donaldson2}, \cite{Fujita2}). The family $\{V_u\}_{u \in \C \setminus \{0,1\}}$ of Fano threefolds of degree 22 with infinite automorphism group are parametrized by $\C \setminus \{0, 1\}$. The Mukai--Umemura threefold is $V_{-1/4}$. Note that $\Aut(V_{-1/4})= \PGL(2, \C)$ as in our case. Donaldson \cite{Donaldson2} proved that $V_{-1/4}$ is K-polystable by computing $\alpha_{\Aut(V_{-1/4})}=5/6$. Note that $\Aut(V_u)=\C^* \rtimes (\Z/2\Z)$ unless $u=-1/4$. Cheltsov--Shramov \cite[Theorem 1.5]{CS} completely calculated $\alpha_{\Aut(V_u)}$ for all $u \in \C \setminus \{0,1\}$. In particular, we have $\alpha_{\Aut(V_u)} > 3/4$ (hence $V_u$ is K-polystable) except when $u=3/4$ or $2$. Fujita \cite[Theorem 1.2]{Fujita2} finally verified that the two exceptional cases are also K-polystable. However, in our case, estimating $\Aut(\Sigma)$-equivariant alpha invariant $\alpha_{\Aut(\Sigma)}(\Sigma)$ does not yield that $\Sigma$ is K-polystable. Indeed, 
$$
\alpha_{\Aut(\Sigma)}(\Sigma) = \frac{d+2}{2d}~~\text{ (Remark \ref{rem:G-alpha})},
$$
so $\alpha_{\Aut(\Sigma)}(\Sigma) \leq 3/4$ as soon as $d \geq 4$.

\medskip

To prove Theorem \ref{thm:main1}, we utilize Zhuang's result \cite[Corollary 4.14]{Zhuang} (see Theorem \ref{thm:Zhuang}): If $G$ is a reductive algebraic group acting on a Fano variety $X$ and $S_X(F) < A_X(F)$ for every $G$-invariant irreducible divisor $F$ over $X$, then $X$ is K-polystable. Here
$$
S_X(F):=\frac{1}{(-K_X)^{\dim X}} \int_0^{\infty} \vol(-K_X - xF) dx,
$$
and $A_X(F)$ is the log discrepancy of $F$. Notice that $\PGL(2, \C)$ is a reductive algebraic group. To apply Zhuang's result, we classify $\Aut(\Sigma)$-orbits (Proposition \ref{prop:Aut-orbits}). This reduces the problem to checking
\begin{equation}\label{eq:intro}\tag{\text{\footnotesize $\bigstar$}}
\begin{array}{l}
\text{$S_\Sigma(F) < A_\Sigma(F)$ for every $G$-invariant irreducible divisor $F$ over $\Sigma$}\\
\text{whose center on $\Sigma$ is the rational curve $C$.}
\end{array}
\end{equation}
Taking an explicit log resolution of the log pair $(\Sigma, T)$, we can compute the log canonical threshold of $T$:
$$
\lct(\Sigma, T)=\frac{d+2}{2d}~~\text{ (Lemma \ref{lem:lct})}.
$$ 
One may say that $T$ has not too bad singularities. Let $D$ be an effective divisor $\Q$-linearly equivalent to $-K_{\Sigma}$ with $T \not\subseteq \Supp(D)$. Note that $(\Sigma, D)$ may not be a log canonical pair (see Remark \ref{rem:alpha}). However, we can argue that the order of vanishing of $D$ along the rational normal curve $C$ behaves as if $(\Sigma, D)$ were a log canonical pair (Lemma \ref{lem:ord_FD}). This is a crucial step of the proof of Theorem \ref{thm:main1}. Then a variant of Fujita's result \cite[Proposition 3.2]{Fujita2} (see Proposition \ref{prop:Fujita}) implies (\ref{eq:intro}), so completes the proof. 

\medskip

Next, we study anticanonical polar cylinders in secant varieties of rational normal curves. Recall that a \emph{$(-K_X)$-polar cylinder} on a $\Q$-Fano variety $X$ is a Zariski open subset $U:=X \setminus \operatorname{Supp}(D)$ for an effective divisor $D \sim_{\Q} -K_X$ such that $U \cong \C^1 \times Z$ for an affine variety $Z$. If $X$ contains a $(-K_X)$-polar cylinder, then $X$ is a rational variety and furthermore 
$$
V:=\Spec \Big( \bigoplus_{m \geq 0} H^0(X, \sO_X(-mkK_X)) \Big)
$$ 
admits an effective $\mathbf{G}_a$-action, where $k>0$ is an integer such that $-kK_X$ is a Cartier divisor (\cite[Theorem 1.15]{CPPZ}; see also \cite{CD}, \cite{KPZ}). There has been a considerable amount of interest to determine whether a given Fano variety has an anticanonical polar cylinder (e.g., \cite{Cheltsov2016,Cheltsov2017}). We refer to  \cite{CPPZ} for a survey of cylinders in Fano varieties.

\medskip

It is easy to check that $\nP^3=\Sigma_1(\nP^1, \sO_{\nP^1}(3))$ has a $(-K_{\nP^3})$-polar cylinder. Indeed, $\nP^3 \setminus D \cong \C^1 \times \C^2$, where $D \sim_{\Q} -1/4K_{\nP^3}$ is a hyperplane of $\nP^3$. On the other hand, there is no $(-K_X)$-polar cylinder in a smooth cubic threefold $X$ because $X$ is not rational \cite{CG}. However, the second main result of the paper shows that every first secant variety of a rational normal curve admits an anitcanonical polar cylinder. In particular, the degenerated cubic threefold $\Sigma_1(\nP^1, \sO_{\nP^1}(4))=V(x_0x_1x_2 + x_3^3+x_4^3) \subseteq \nP^4$ admits an anticanonical polar cylinder. 

\begin{theorem}\label{thm:main2}
Let $\Sigma$ be the first secant variety of a rational normal curve of degree $d \geq 3$. Then there exists a $(-K_{\Sigma})$-polar cylinder in $\Sigma$. Consequently, the affine cone in $\mathbf{A}^{d+1}$ over $\Sigma$ in $\mathbf{P}^d$ admits an effective $\mathbf{G}_a$-action.
\end{theorem}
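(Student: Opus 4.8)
The plan is to exhibit an explicit $(-K_\Sigma)$-polar cylinder using the catalecticant presentation of $\Sigma$. Fix homogeneous coordinates $x_0,\dots,x_d$ on $\nP^d$ so that $C=v_d(\nP^1)$ is the standard rational normal curve, and recall that every point of $\Sigma$ makes the $3\times 3$ minors of the catalecticant matrix
$$
\mathrm{Cat}=\begin{pmatrix} x_0 & x_1 & \cdots & x_{d-2}\\ x_1 & x_2 & \cdots & x_{d-1}\\ x_2 & x_3 & \cdots & x_d\end{pmatrix}
$$
vanish, which is immediate since a point $c_1v_d(P)+c_2v_d(Q)$ has $\mathrm{rank}\,\mathrm{Cat}\le 2$ and this rank bound is a closed condition. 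Put $Q:=\{x_1^2-x_0x_2=0\}$ and
$$
U:=\Sigma\cap\{x_0\ne 0\}\cap\{x_1^2-x_0x_2\ne 0\}.
$$
First I would record that $C\subseteq Q$, while a general point of the tangent developable $T$ satisfies $x_1^2-x_0x_2\ne 0$; hence $\Sigma\cap Q$ is a nonzero effective Cartier divisor linearly equivalent to $2H$, and $H_0:=\Sigma\cap\{x_0=0\}$ is a nonzero effective Cartier divisor linearly equivalent to $H$. Thus $D_0:=H_0+(\Sigma\cap Q)\sim 3H$ has $\Supp(D_0)=\Sigma\setminus U$, and since $-K_\Sigma\sim_{\Q}\tfrac{4}{d-2}H$, the effective $\Q$-divisor $D:=\tfrac{4}{3(d-2)}D_0\sim_{\Q}-K_\Sigma$ satisfies $U=\Sigma\setminus\Supp(D)$. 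It remains to prove that $U$ is a cylinder.

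For this I would set up mutually inverse morphisms between $U$ and $W:=\{(x_1,a,b)\in\C^3:x_1^2-ax_1-b\ne 0\}$. In the chart $x_0=1$: a point of $\Sigma$ with $x_1^2-x_2\ne 0$ has $\mathrm{rank}\,\mathrm{Cat}=2$ with its first two rows independent (rank $\le 1$, or dependence of rows $1$ and $2$, would force $x_1^2=x_2$), so the third row is uniquely $a\cdot(\text{row }2)+b\cdot(\text{row }1)$; its first two entries read $x_2=ax_1+b$ and $x_3=ax_2+bx_1$, which determine $a,b$ as regular functions on $U$ because the relevant determinant equals $x_1^2-x_2\ne 0$, and the remaining entries then give $x_{k+2}=ax_{k+1}+bx_k$. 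Conversely, given $(x_1,a,b)\in W$, set $x_0=1$ and $x_{k+2}=ax_{k+1}+bx_k$ for $0\le k\le d-2$; since the general solution of this recursion is $c_1\tau_1^k+c_2\tau_2^k$ (or $(c_1+c_2k)\tau^k$ when $\tau_1=\tau_2=\tau$), with $\tau_1,\tau_2$ the roots of $z^2-az-b$, the resulting point lies on a secant or a tangent line of $C$, hence on $\Sigma$, and it lies in $U$ because $x_1^2-x_0x_2=-(x_1^2-ax_1-b)\ne 0$. These maps are inverse to one another, so $U\cong W$; and the involutive triangular automorphism $(x_1,a,b)\mapsto(x_1,a,\,x_1^2-ax_1-b)$ of $\C^3$ identifies $W$ with $\C^2\times\C^*=\C^1\times(\C^1\times\C^*)$. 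Hence $U=\Sigma\setminus\Supp(D)$ is a $(-K_\Sigma)$-polar cylinder, and the assertion on the affine cone follows from \cite[Theorem 1.15]{CPPZ}.

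The point needing the most care is the surjectivity of $W\to U$: one must verify that $\Sigma\cap\{x_0\ne 0,\ x_1^2\ne x_0x_2\}$ consists exactly of the points produced by the linear recursion, which is precisely the assertion that on this locus $\mathrm{Cat}$ has rank exactly $2$ with independent first two rows. The other ingredients — the divisor classes of $H_0$ and $\Sigma\cap Q$, the non-containment $\Sigma\not\subseteq Q$, and the final change of coordinates — are routine, and the whole argument is uniform for $d\ge 3$, the case $d=3$ (where $\Sigma=\nP^3$) being included automatically.
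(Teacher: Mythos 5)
Your proposal is correct, but it takes a genuinely different route from the paper. The paper works on the resolution $\beta\colon B=\nP(E)\to\Sigma$ of \cite{ENP2}: it takes $D=\tfrac{4}{d}\bigl(\beta(\pi^{-1}(\ell))+\beta(H_0)\bigr)$, where $\ell\subset\nP^2$ is the tangent line to the conic $Q=\sigma(\Delta)$ at a point and $H_0$ is the tautological divisor attached to the section of $\sO_{\nP^1}(d)$ vanishing to order $d$ at $[1,0]$, and then identifies $B\setminus(\pi^{-1}(\ell)\cup Z\cup H_0)\cong\C^1\times\C^1\times\C^*$ by writing out the equation $w_0^2-2yw_0w_1+xw_1^2$ of the exceptional divisor inside $\nP^1\times\C^2$. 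You stay entirely inside $\nP^d$ and use the Hankel/catalecticant description: your anticanonical divisor is supported on a hyperplane section plus a quadric section (so $D_0\sim 3H$ and $D=\tfrac{4}{3(d-2)}D_0\sim_{\Q}-K_\Sigma$), and the cylinder structure comes from solving the order-two linear recursion on the locus where the first two rows of the catalecticant are independent; the delicate point, surjectivity of $W\to U$, is correctly reduced to the vanishing of the $3\times 3$ minors on $\Sigma$ plus the independence of the first two rows when $x_1^2\ne x_0x_2$. Your route avoids the blow-up model and the divisor-class bookkeeping on $B$ carried out in Section 2, uses only $-K_\Sigma\sim_{\Q}\tfrac{4}{d-2}H$, and is uniform in $d\ge 3$; the paper's route exhibits the cylinder through the $\nP^1$-bundle geometry that is used throughout the rest of the paper (divisors of the shape $\beta(\pi^{-1}(\ell))$ also serve in Remark \ref{rem:alpha}).

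One point to tighten: to see that the point built from $(x_1,a,b)\in W$ lies on $\Sigma$, you invoke the closed-form solutions $c_1\tau_1^k+c_2\tau_2^k$, resp.\ $(c_1+c_2k)\tau^k$, of the recursion, and this does not describe all solutions when $0$ is a root of $z^2-az-b$: for $a=b=0$ the constructed point is $[1,x_1,0,\dots,0]$, which is not of that form. The conclusion still holds — $[1,x_1,0,\dots,0]$ lies on the tangent line to $C$ at $[1,0,\dots,0]$, and for $b=0$, $a\ne 0$ the point lies on the secant line through $v_d([1,0])$ and $v_d([1,a])$ — or, more economically, the recursion defines a morphism $\C^3\to\nP^d$ that lands in the closed subset $\Sigma$ because it does so on the dense open locus where $z^2-az-b$ has two distinct nonzero roots. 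With that degenerate case addressed, your argument is complete, and the final reduction to the affine-cone statement via \cite{CPPZ} is the same as in the paper.
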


Recall from \cite[Corollary 1.17]{CPPZ} that if $X \subseteq \nP^r$ is a Fano variety of Picard number 1, then the affine cone in $\mathbf{A}^{r+1}$ over $X$ admits an effective $\mathbf{G}_a$-action if and only if $X$ is cylindrical. To prove Theorem \ref{thm:main2}, we directly construct an effective $\Q$-divisor $D \sim_{\Q} -K_{\Sigma}$ such that 
\begin{equation}\label{eq:intro2}\tag{\text{\footnotesize $\spadesuit$}}
\Sigma \setminus \operatorname{Supp}(D) \cong \C^1 \times (\C^1 \times \C^*).
\end{equation}
If $\beta \colon B \to \Sigma$ is the blow-up of $\Sigma$ along $C$, then $B=\nP(E)$ for some rank two vector bundle on $\nP^2$ (see \cite[Theorem 1.1]{ENP2}). Let $\pi \colon B \to \nP^2$ be the canonical projection. Then $B \setminus \pi^{-1}(\ell) \cong \C^2 \times \nP^1$ for any line $\ell$ in $\nP^2$. For some specific choice of $\ell$, we explicitly describe $H_0 \cap B \setminus \pi^{-1}(\ell)$ in $\C^2 \times \nP^1$ for some tautological divisor $H_0$ on $B=\nP(E)$, and we show that
$$
\Sigma \setminus (\beta(\pi^{-1}(\ell)) \cup \beta(H_0)) = B \setminus ( \pi^{-1}(\ell) \cup  H_0) \cong \C^1 \times \C^1 \times \C^*.
$$
Note that $\beta(\pi^{-1}(\ell)) \sim_{\Q} -1/2 K_{\Sigma}$ and $\beta(H_0) \sim_{\Q} -(d-2)/4 K_{\Sigma}$. Thus the effective divisor $D:=4/d(\beta(\pi^{-1}(\ell))  + \beta(H_0))$ satisfies (\ref{eq:intro2}).

\medskip

It is expected by Choe--Kwak \cite{CK} that there would be a “matryoshka structure” among all secant varieties of a fixed projective variety. This means that similar patterns of algebraic/geometric properties of secant varieties repeat over and over again as matryoshka dolls. It was shown in \cite{CKP}, \cite{ENP1} that syzygies of secant varieties of smooth projective curve have a matryoshka structure. The \emph{$k$-th secant variety} 
$$
\Sigma_k:=\Sigma_k(\nP^1, \sO_{\nP^1}(d)) \subseteq \nP^d
$$ 
of a rational normal curve $C \subseteq \nP^d$ of degree $d$ is the closure of the union of $(k+1)$-secant $k$-planes to $C$ in $\nP^d$. If $d \geq 2k+1$, then \cite[Theorem 1.1]{ENP1} says that $\Sigma_k$ is a $\Q$-factorial normal projective variety of Picard number 1 such that $\Sigma_k$ has log terminal singularities and $-K_{\Sigma}$ is ample. Note that $\dim \Sigma_k = 2k+1$ and $\Sing \Sigma_k = \Sigma_{k-1}$ unless $d=2k+1$ (in this case $\Sigma_k = \nP^{2k+1}$). From the point of view of the matryoshka structure, it is natural to expect that the main results of the present paper would be extended to higher secant varieties of rational normal curves.

\begin{conjecture}
Let $\Sigma_k$ be the $k$-th secant variety of a rational normal curve of degree $d \geq 2k+1$. Then $\Sigma_k$ is K-polystable, and there is a $(-K_{\Sigma_k})$-polar cylinder in $\Sigma_k$. 
\end{conjecture}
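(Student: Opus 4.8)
The plan is to carry the matryoshka structure through both arguments, mimicking the $k=1$ proof stratum by stratum. For K-polystability the first step is the automorphism group: since $\Sing\Sigma_k=\Sigma_{k-1}$ for $d\geq 2k+2$, any automorphism preserves the nested chain $\Sigma_0=C\subset\Sigma_1\subset\cdots\subset\Sigma_{k-1}$ and hence induces an automorphism of $C\cong\nP^1$; as the whole construction is $\PGL(2,\C)$-equivariant and $\Sigma_k$ is swept out by the $(k+1)$-secant $k$-planes spanned by points of $C$, one expects $\Aut(\Sigma_k)=\PGL(2,\C)$, generalizing Proposition \ref{prop:Aut(Sigma)}. (When $d=2k+1$ one has $\Sigma_k=\nP^{2k+1}$, classically K-polystable.) Since $\PGL(2,\C)$ is reductive, one invokes Zhuang's criterion (Theorem \ref{thm:Zhuang}): it suffices to show $S_{\Sigma_k}(F)<A_{\Sigma_k}(F)$ for every $\PGL(2,\C)$-invariant prime divisor $F$ over $\Sigma_k$. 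The relevant invariant centers are $\Sigma_k$ itself, the members of the chain $\Sigma_0\subset\cdots\subset\Sigma_{k-1}$, and the ``tangential'' subvarieties $T_j\subset\Sigma_j$ (for $k=1$, $T_1=T\in|-K_\Sigma|$), so one classifies $\Aut(\Sigma_k)$-orbits as in Proposition \ref{prop:Aut-orbits}, noting that for $k\geq 3$ the stratum $\Sigma_j\setminus\Sigma_{j-1}$ already carries positive-dimensional families of orbits, which complicates the bookkeeping but not the logic.

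The analytic core is to estimate $S_{\Sigma_k}(F)$, and for this one needs a workable birational model. Blowing up $\Sigma_k$ along $\Sigma_{k-1}$ should yield, as in \cite[Theorem 1.1]{ENP2}, the secant bundle $\pi\colon\widetilde{\Sigma}_k=\nP(\mathcal E)\to\nP^{k+1}$ over $\mathrm{Hilb}^{k+1}(\nP^1)=\nP^{k+1}$, a $\nP^k$-bundle, together with a birational morphism $\beta\colon\widetilde{\Sigma}_k\to\Sigma_k$; one expects $-K_{\Sigma_k}\sim_{\Q}\tfrac{2(k+1)}{d-2k}H$, which specializes correctly to $\nP^{2k+1}$ and to the cubic threefold. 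Using $\widetilde{\Sigma}_k$ one computes the volumes $\vol(-K_{\Sigma_k}-xF)$ and, via an explicit log resolution of $(\Sigma_k,T_k)$ generalizing Lemma \ref{lem:lct}, the threshold $\lct(\Sigma_k,T_k)$, which should again be strictly less than $1$ and given by a rational function of $d,k$ specializing to $\tfrac{d+2}{2d}$. The decisive point, as for $k=1$, is the analog of Lemma \ref{lem:ord_FD}: bounding, along each singular stratum, the order of vanishing of a general effective $D\sim_{\Q}-K_{\Sigma_k}$ with $T_k\not\subseteq\Supp(D)$, so that it behaves as though $(\Sigma_k,D)$ were log canonical even when it is not. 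Feeding these bounds into the Fujita-type Proposition \ref{prop:Fujita} yields $S_{\Sigma_k}(F)<A_{\Sigma_k}(F)$ for $F$ centered on the deep strata, while the cases where the center is a divisor (for instance $T_k$, where $T_k\sim_\Q -K_{\Sigma_k}$ forces $S_{\Sigma_k}(T_k)=\tfrac{1}{2k+2}<1$, or $\Sigma_k$ itself) follow from cruder estimates of $\vol(-K_{\Sigma_k}-xF)$; Zhuang's criterion then gives K-polystability, hence a weak K\"ahler--Einstein metric by \cite[Theorem 1.6]{LXZ}.

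For the polar cylinder one uses the same model $\beta\colon\widetilde{\Sigma}_k=\nP(\mathcal E)\to\Sigma_k$, $\pi\colon\widetilde{\Sigma}_k\to\nP^{k+1}$. For a hyperplane $\ell\subset\nP^{k+1}$ the complement $\nP^{k+1}\setminus\ell\cong\C^{k+1}$ is affine, so $\mathcal E$ trivializes there (Quillen--Suslin) and $\pi^{-1}(\nP^{k+1}\setminus\ell)\cong\C^{k+1}\times\nP^k$. One then chooses $\ell$ together with a suitable tautological-type divisor $H_0$ on $\nP(\mathcal E)$ so that inside $\C^{k+1}\times\nP^k$ the locus $H_0$ is carried to a ``constant'' union of hyperplanes of $\nP^k$ (or a mild variant thereof), arranging
$$
\Sigma_k\setminus\bigl(\beta(\pi^{-1}(\ell))\cup\beta(H_0)\bigr)=\widetilde{\Sigma}_k\setminus\bigl(\pi^{-1}(\ell)\cup H_0\bigr)\cong\C^1\times Z
$$
with $Z$ an affine variety (for $k=1$ this is $\C^1\times\C^*$). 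Since $\Pic(\Sigma_k)$ has rank one, $\beta(\pi^{-1}(\ell))$ and $\beta(H_0)$ are positive rational multiples $a(-K_{\Sigma_k})$ and $b(-K_{\Sigma_k})$ of the anticanonical class, and $D:=\tfrac{1}{a+b}\bigl(\beta(\pi^{-1}(\ell))+\beta(H_0)\bigr)\sim_{\Q}-K_{\Sigma_k}$ is the desired polar cylinder; the statement about the affine cone then follows from \cite[Corollary 1.17]{CPPZ}.

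The main obstacle is twofold. First, all the explicit geometry used for $k=1$ must be re-derived for the secant bundle $\nP(\mathcal E)$ over $\nP^{k+1}$: the bundle $\mathcal E$ and its tautological linear systems, the log resolution of $(\Sigma_k,T_k)$, and the $\PGL(2,\C)$-orbit stratification with its moduli of orbits when $k\geq3$. Second, and more seriously, the crucial estimate on $\ord_F(D)$ for a general $D\sim_\Q -K_{\Sigma_k}$ does not formally reduce to the $k=1$ computation: the strata $\Sigma_j$ are themselves singular, so the argument behind Lemma \ref{lem:ord_FD} must be reorganized to propagate inductively down the tower $\Sigma_k\supset\Sigma_{k-1}\supset\cdots\supset C$, and it is this inductive control of non-log-canonical anticanonical divisors along a chain of singular centers that I expect to be the real difficulty; likewise, pinning down the precise $\ell$ and $H_0$ that make the complement a genuine cylinder (rather than merely a rational variety) requires the explicit description of $\nP(\mathcal E)$, currently available only for small $k$.
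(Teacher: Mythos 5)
The statement you are addressing is not proved in the paper at all: it is stated there as an open conjecture (the paper only proves the case $k=1$, Theorems \ref{thm:main1} and \ref{thm:main2}), so there is no proof of record to compare against, and your text must stand on its own as a proof. It does not: it is a program that transplants the $k=1$ argument and, as you yourself concede in the last paragraph, leaves the decisive steps unestablished. Concretely, on the K-polystability side the reduction that makes the $k=1$ proof work genuinely breaks for $k\geq 2$, and not merely as ``bookkeeping'': since $\dim\PGL(2,\C)=3<\dim\Sigma_k=2k+1$, an $\Aut(\Sigma_k)$-invariant prime divisor on $\Sigma_k$ is no longer forced to be a single distinguished member such as $T$; invariant divisors sweep out positive-dimensional families of orbits and occur in infinite families (already their centers need not be contained in $\Sigma_{k-1}$ or a tangential stratum), so Theorem \ref{thm:Zhuang} demands a bound $S_{\Sigma_k}(F)<A_{\Sigma_k}(F)$ over an unclassified infinite set, which is a delta-invariant-type estimate rather than the three-case check of Proposition \ref{prop:Aut-orbits}. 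Likewise, the analogue of Lemma \ref{lem:ord_FD} --- the one genuinely new idea in the $k=1$ proof --- is exactly what you do not supply, and the intermediate assertions you lean on (e.g.\ $\Aut(\Sigma_k)=\PGL(2,\C)$, $T_k\sim_{\Q}-K_{\Sigma_k}$ hence $S_{\Sigma_k}(T_k)=\tfrac{1}{2k+2}$, the value of $\lct(\Sigma_k,T_k)$) are asserted, not proved.

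On the cylinder side the same issue recurs: trivializing $\mathcal{E}$ over $\nP^{k+1}\setminus\ell\cong\C^{k+1}$ is easy, but the whole content of the $k=1$ proof is the explicit choice of $\ell$ and of the tautological divisor $H_0$, together with the explicit equation of the exceptional divisor in the chart, which permits the coordinate change exhibiting $\C^1\times(\C^1\times\C^*)$; for $k\geq 2$ you give no candidate for either, and hoping that $H_0$ becomes a ``constant union of hyperplanes'' is precisely what must be proved. Note also that your displayed identity
$$
\Sigma_k\setminus\bigl(\beta(\pi^{-1}(\ell))\cup\beta(H_0)\bigr)=\widetilde{\Sigma}_k\setminus\bigl(\pi^{-1}(\ell)\cup H_0\bigr)
$$
is not correct as written: since $\beta^{*}\beta(\pi^{-1}(\ell))$ contains the exceptional divisor with positive coefficient, the exceptional divisor $Z_{k-1}$ must also be removed on the right-hand side (in the paper's $k=1$ proof the open set is $B\setminus(\pi^{-1}(\ell)\cup Z\cup H_0)$), and whether the resulting complement is a cylinder, rather than merely affine and rational, is exactly the open question. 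In short, the proposal is a reasonable research plan consistent with the matryoshka philosophy the paper itself advocates, but it is not a proof of the conjecture.
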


The paper is organized as follows. We begin with recalling basic facts on the first secant variety $\Sigma$ of a rational normal curve in Section \ref{sec:secvar}, where we also study the automorphism group of $\Sigma$. Section \ref{sec:K-poly} is devoted to proving K-polystability of $\Sigma$ (Theorem \ref{thm:main1}). Finally, in Section \ref{sec:cyl}, we establish that $\Sigma$ admits a $(-K_{\Sigma})$-polar cylinder (Theorem \ref{thm:main2}).

\section{Secant Varieties}\label{sec:secvar}
In this section, we collect relevant basic facts on secant varieties and tangent developable surfaces of rational normal curves. We refer to \cite{ENP1} and \cite{Park} for more details. We keep using the notations in the introduction. Let 
$$
C \subseteq \nP H^0(\nP^1, \sO_{\nP^1}(d)) =\nP^d
$$
be a rational normal curve of degree $d \geq 4$. In this paper, the projectivization $\nP V$ of a finite dimensional vector space $V$ over $\C$ is the space of one-dimensional quotient of $V$. Let
$$
\Sigma:=\Sigma_1(\nP^1, \sO_{\nP^1}(d)) \subseteq  \nP^d
$$
be the first secant variety of $C$ in $\nP^d$. Recall that $\Sigma$ is a $\Q$-factorial normal projective variety of Picard number 1 such that $\Sigma$ has log terminal singularities and $-K_{\Sigma}$ is ample (\cite[Theorem 1.1]{ENP1}). 

\medskip

We freely use the standard definitions of singularities of the minimal model program  in \cite{Kollar}, \cite{KM}. Here we briefly recall some basic necessary definitions. Let $(X, \Gamma)$ be a log pair. In other words, $X$ is a normal projective variety, and $\Gamma$ is an effective $\Q$-divisor such that $K_X + \Gamma$ is $\Q$-Cartier. For a prime divisor $F$ over $X$, take a birational morphism $f \colon \widetilde{X} \to X$ such that $F$ is a divisor on $\widetilde{X}$. The \emph{log discrepancy} of $F$ is defined as
$$
A_X(F):=1+\ord_F (K_{\widetilde{X}} - f^*K_X).
$$
We say that $(X, \Gamma)$ is a \emph{klt} (\emph{Kawamata log terminal}) pair (resp. an \emph{lc} (\emph{log canonical}) pair) if $A_X(F) > 0$ (resp. $A_X(F) \geq 0$) for every prime divisor $F$ over $X$. We say that $X$ has \emph{log terminal singularities} if $(X, 0)$ is a klt pair. 

\medskip

Regarding $\nP^2$ as the Hilbert scheme of two points on $\nP^1$, we have the universal family map 
$$
\sigma \colon \nP^1 \times \nP^1 \longrightarrow \nP^2,~~(x,y) \longmapsto x+y.
$$
Consider the following globally generated rank two vector bundle on $\nP^2$ defined by
$$
E:=\sigma_* (\sO_{\nP^1} \boxtimes \sO_{\nP^1}(d)).
$$ 
Note that $\det E = \sO_{\nP^2}(d-1)$. Put $B:=\nP(E)$, and let $\pi \colon \nP(E) \to \nP^2$ be the canonical projection. The tautological divisor $H$ on $B$ with $\sO_B(H) = \sO_{\nP(E)}(1)$ is a base point free, and the image of the morphism given by $|H|$ is the secant variety $\Sigma$. We denote by the map
$$
\beta \colon B \longrightarrow \Sigma,
$$
which is the blow-up of $\Sigma$ along $C$ with the exceptional divisor $Z=\nP^1 \times \nP^1$ (\cite[Theorem 1.1]{ENP2}). Then $\beta$ is a resolution of singularities of $\Sigma$. Notice that $\pi|_Z = \sigma \colon \nP^1 \times \nP^1 \to \nP^2$. We have a commutative diagram
$$
\xymatrix{
Z=\nP^1 \times \nP^1 \ar@{^{(}->}[r]  \ar[rd]_-{\sigma} & B=\nP(E) \ar[r]^-{\beta}  \ar[d]^-{\pi} & \Sigma \subseteq \nP^d \\
& \nP^2. &
}
$$
Let $A$ be a line on $\nP^2$ so that $\sO_{\nP^2}(A) = \sO_{\nP^2}(1)$. By \cite[Proposition 3.15]{ENP1}, we have
$$
Z \sim 2H - \pi^* (d-2)A.
$$
Since $K_B = -2H + \pi^* (d-4)A$, it follows that
$$
K_B+Z+\pi^* 2A \sim 0.
$$
By abuse of notation, denote by $H$ a hyperplane section of $\Sigma \subseteq \nP^d$ so that $\beta^* H = H$. 

\begin{proposition}\label{prop:-K_{Sigma}}
We have
$$
-K_{\Sigma} \sim_{\Q} \frac{4}{d-2}H~~\text{ and }~~K_B = \beta^* K_{\Sigma} - \frac{d-4}{d-2} Z.
$$
\end{proposition}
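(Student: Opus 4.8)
The plan is to deduce both formulas simultaneously from the two displayed relations $K_B = -2H + \pi^*(d-4)A$ and $Z \sim 2H - \pi^*(d-2)A$ recorded above, by passing to the $\Q$-basis $\{H, Z\}$ of $N^1(B)_\Q$.

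First I would observe that, since $\Sigma$ is $\Q$-factorial normal projective of Picard number $1$ and $H$ is an ample Cartier divisor, the space $N^1(\Sigma)_\Q$ is the one-dimensional $\Q$-vector space $\Q \cdot H$; in particular $-K_\Sigma$, being $\Q$-Cartier and ample, satisfies $-K_\Sigma \sim_\Q \lambda H$ for a unique $\lambda \in \Q_{>0}$. Next, since $\beta \colon B \to \Sigma$ is the blow-up of $\Sigma$ along $C$ with the single exceptional divisor $Z$, the definition of discrepancy provides a unique $a \in \Q$ with $K_B \sim_\Q \beta^* K_\Sigma + aZ$. Using $\beta^* H = H$ we get $\beta^* K_\Sigma \sim_\Q -\lambda H$, hence $K_B \sim_\Q -\lambda H + aZ$. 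So it suffices to express $K_B$ in terms of the classes $H$ and $Z$ and read off $\lambda$ and $a$.

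For the computation, because $d \geq 4$ the relation $Z \sim 2H - \pi^*(d-2)A$ can be solved as $\pi^* A \sim \frac{1}{d-2}(2H - Z)$, so $\{H, Z\}$ is indeed a $\Q$-basis of $N^1(B)_\Q$. Substituting into $K_B = -2H + \pi^*(d-4)A$ yields
$$
K_B \sim_\Q -2H + \frac{d-4}{d-2}(2H - Z) = -\frac{4}{d-2}H - \frac{d-4}{d-2}Z.
$$
Comparing with $K_B \sim_\Q -\lambda H + aZ$ and using uniqueness of coordinates in the basis $\{H, Z\}$, I conclude $\lambda = \frac{4}{d-2}$ and $a = -\frac{d-4}{d-2}$, which are precisely the two claimed identities.

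There is essentially no hard step here: the content is entirely the two displayed relations above (which themselves come from $\det E = \sO_{\nP^2}(d-1)$ and the Euler-sequence computation of $K_{\nP(E)}$, together with $Z \sim 2H - \pi^*(d-2)A$ from \cite[Proposition 3.15]{ENP1}). The only points needing a word of care are that $\beta$ has $Z$ as its unique exceptional divisor and that $\beta^* H = H$ (both recalled above), and that $d \geq 4$ guarantees $\{H, Z\}$ is a basis so that the coefficients of $H$ and $Z$ in $K_B$ are unambiguous, consistent with the standing assumption $d \geq 4$ of this section.
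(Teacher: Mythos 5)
Your proof is correct and follows essentially the same route as the paper: write $-K_\Sigma \sim_\Q aH$ (Picard number one) and $K_B = \beta^*K_\Sigma - bZ$, then solve for the two coefficients using $K_B = -2H + \pi^*(d-4)A$ and $Z \sim 2H - \pi^*(d-2)A$. The only cosmetic difference is that you compare coefficients in the basis $\{H, Z\}$ of $N^1(B)_\Q$, whereas the paper expands $Z$ and compares in the basis $\{H, \pi^*A\}$; both give the same linear system and the same values.
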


\begin{proof}
As $\Sigma$ is $\Q$-factorial and the Picard number of $\Sigma$ is $1$, we may write $-K_{\Sigma} \sim_{\Q} a H$ for some positive $a \in \Q$. We have
$$
K_B = \beta^* K_{\Sigma} - b Z
$$
where $b:=A_{\Sigma}(Z)-1$ is the discrepancy of $Z$. Then 
$$
-2H + \pi^*(d-4)A = -aH - 2bH + b(d-2) \pi^*A,
$$
so $a=4/(d-2)$ and $b=(d-4)/(d-2)$.
\end{proof}

As $H^3 = \deg \Sigma = (d-1)(d-2)/2$ (\cite[Proposition 5.11]{ENP1}), we get
$$
(-K_{\Sigma})^3 = 32\frac{(d-1)}{(d-2)^2}.
$$

\begin{remark}
Let $\Sigma_k$ be the $k$-th secant variety of a rational normal curve of degree $d \geq 2k+1$ in $\nP^d$, and $H_k$ be a hyperplane section. Then one can also show that
$$
-K_{\Sigma_k} \sim_{\Q}  \frac{2(k+1)}{d-2k}H_k.
$$
As $H_k^{2k+1} = \deg \Sigma_{k,d}= { d -k \choose k+1}$ (\cite[Proposition 5.11]{ENP1}), we get
$$
(-K_{\Sigma_k})^{2k+1} =  \left( \frac{2(k+1)}{d-2k} \right)^{2k+1} { d -k \choose k+1}.
$$
It is also known that if $\beta_k \colon B_k \to \Sigma_k$ is the blow-up of $\Sigma_k$ along $\Sigma_{k-1}$, then $B_k =\nP(E)$ for some globally generated rank $k+1$ vector bundle on $\nP^{k+1}$ (\cite[Theorem 1.1]{ENP2}). Let $Z_{k-1}$ be the exceptional divisor of $\beta_k$. Then one can also prove that
$$
K_{B_k}=\beta_k^*K_{\Sigma_{k}} - \frac{d-2k-2}{d-2k}Z_{k-1}.
$$
\end{remark}

Now, let $\Delta \subseteq \nP^1 \times \nP^1$ be the diagonal. Then $Q:=\sigma(\Delta) \subseteq \nP^2$ is a smooth conic. Notice that $E|_Q$ is the first jet bundle of $\sO_{\nP^1}(d)$. By \cite[Corollary 1.8]{Kaji} (see also \cite[Subsection 2.5]{Park}), 
$$
E|_Q = \sO_{\nP^1}(d-1) \oplus \sO_{\nP^1}(d-1).
$$ 
Put
$$
\widetilde{T}:=\pi^{-1}(Q)=\nP (E|_Q) = \nP^1 \times \nP^1.
$$
Then $\pi_{\widetilde{T}}:=\pi|_{\widetilde{T}} \colon \nP (E|_Q) \to Q$ is the canonical projection. Observe that $\widetilde{T}$ and $Z$ meet along the diagonal with multiplicity $2$. 
Note that
$$
T:=\beta(\widetilde{T}) \subseteq \nP^d
$$
is the tangent developable surface of $C$ in $\nP^d$. 

\begin{proposition}\label{prop:beta^*T}
We have
$$
T \sim -K_{\Sigma}~~\text{ and }~~\beta^* T =\widetilde{T}+ \frac{2}{d-2} Z.
$$
\end{proposition}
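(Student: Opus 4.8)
The plan is to transfer the divisor relations already established on $B$ down to $\Sigma$ along the blow-up $\beta$, using that $\widetilde{T}=\pi^{-1}(Q)$ is the strict transform of $T$ and that $Q$ is a smooth conic. Since $Q\subseteq\nP^2$ is a conic, $Q\sim 2A$, and as $\pi$ is a smooth $\nP^1$-bundle, $\widetilde{T}=\pi^{-1}(Q)=\pi^*Q\sim 2\pi^*A$. Substituting this into the relation $K_B+Z+\pi^*2A\sim 0$ gives
$$
K_B+Z+\widetilde{T}\sim 0 .
$$
Restricting to the open set $B\setminus Z$, which $\beta$ maps isomorphically onto $\Sigma\setminus C$, the divisor $Z$ disappears while $\widetilde{T}$ becomes $T|_{\Sigma\setminus C}$, so $K_{\Sigma\setminus C}+T|_{\Sigma\setminus C}\sim 0$. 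Because $\Sigma$ is normal and $C=\Sing\Sigma$ has codimension $2$ in $\Sigma$, the restriction map $\Cl(\Sigma)\to\Cl(\Sigma\setminus C)$ is an isomorphism, and it carries $K_\Sigma+T$ to $K_{\Sigma\setminus C}+T|_{\Sigma\setminus C}$; hence $K_\Sigma+T\sim 0$, that is $T\sim -K_\Sigma$.

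For the pullback formula, note that $T$ is $\Q$-Cartier since $\Sigma$ is $\Q$-factorial, so $\beta^*T$ is a well-defined $\Q$-divisor. Over $\Sigma\setminus C$ the map $\beta$ is an isomorphism and $T$ is reduced, and $Z$ is the unique $\beta$-exceptional prime divisor, so we may write $\beta^*T=\widetilde{T}+mZ$ for a unique $m\in\Q$. To pin down $m$, combine $T\sim -K_\Sigma$ with Proposition \ref{prop:-K_{Sigma}} and the relation $-K_B\sim Z+\pi^*2A\sim Z+\widetilde{T}$:
$$
\beta^*T \;\sim\; \beta^*(-K_\Sigma) \;=\; -K_B-\tfrac{d-4}{d-2}Z \;\sim\; \widetilde{T}+\tfrac{2}{d-2}Z
$$
in $\Pic(B)_\Q$. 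Comparing with $\beta^*T=\widetilde{T}+mZ$ yields $\bigl(m-\tfrac{2}{d-2}\bigr)[Z]=0$ in $\Pic(B)_\Q$, and since $[Z]=2[H]-(d-2)[\pi^*A]\neq 0$ this forces $m=\tfrac{2}{d-2}$, as claimed. (Alternatively, one reads off $m$ by intersecting $\beta^*T=\widetilde{T}+mZ$ with a general fibre $f$ of $\beta|_Z\colon Z\to C$: such an $f$ satisfies $f^2=0$ in $Z\cong\nP^1\times\nP^1$, hence is a ruling and $\pi(f)$ is a line in $\nP^2$, so $\beta^*T\cdot f=0$, $\widetilde{T}\cdot f=\pi^*Q\cdot f=2$, and $Z\cdot f=(2H-(d-2)\pi^*A)\cdot f=-(d-2)$.)

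The statement is essentially formal once the geometry of $\beta$ and $\widetilde{T}$ is in hand, so I do not anticipate a genuine obstacle. The two points deserving care are: (i) justifying that linear equivalences may be transported across $\beta$ — handled above by restricting to $\Sigma\setminus C$ and invoking codimension-$2$ invariance of the divisor class group, rather than by pushing or pulling Weil divisors directly; and (ii) verifying that $\widetilde{T}=\pi^{-1}(Q)$ is genuinely the strict transform of $T$, so that it enters $\beta^*T$ with coefficient exactly $1$ — this holds because $\widetilde{T}$ is irreducible, not contained in $Z$, and maps onto $T$.
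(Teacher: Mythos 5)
Your proof is correct and takes essentially the same route as the paper: establish $K_B+Z+\widetilde{T}\sim 0$ and transport it across $\beta$ to get $T\sim -K_{\Sigma}$, then write $\beta^*T=\widetilde{T}+cZ$ and determine $c$ from known relations in $\Pic(B)_{\Q}$. The only cosmetic difference is that you pin down $c=\tfrac{2}{d-2}$ via the discrepancy formula $K_B=\beta^*K_{\Sigma}-\tfrac{d-4}{d-2}Z$ (and, in your careful justification of $T\sim-K_{\Sigma}$, via codimension-two invariance of the class group), whereas the paper compares coefficients of $H$ and $\pi^*A$ using $T\sim\tfrac{4}{d-2}H$ and $Z\sim 2H-(d-2)\pi^*A$; both are immediate consequences of Proposition \ref{prop:-K_{Sigma}}.
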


\begin{proof}
Since $K_B+Z + \widetilde{T} \sim 0$, it follows that $T \sim -K_{\Sigma}$. We know that $T \sim 4/(d-2) H$ (see Proposition \ref{prop:-K_{Sigma}}). We have $\beta^* T =\widetilde{T}+ c Z$. Then 
$$
\frac{4}{d-2} H = 2\pi^*A + 2cH - c(d-2)\pi^*A,
$$
so $c=2/(d-2)$.
\end{proof}

Next, we turn to the study of automorphism group of $\Sigma$. The next two propositions are important ingredients of the proof of Theorem \ref{thm:main1}.

\begin{proposition}\label{prop:Aut(Sigma)}
$\Aut(\Sigma) = \Aut(\nP^1) = \PGL(2, \C)$.
\end{proposition}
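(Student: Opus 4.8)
The plan is to show both inclusions are actually equalities through a chain of geometric identifications that are forced by the canonical constructions attached to $\Sigma$. Since $C = \Sing\Sigma$ is intrinsically defined, every $\varphi \in \Aut(\Sigma)$ preserves $C$, hence restricts to an automorphism $\varphi|_C \in \Aut(C) = \Aut(\nP^1) = \PGL(2,\C)$. This gives a group homomorphism $r \colon \Aut(\Sigma) \to \PGL(2,\C)$. Conversely, the whole construction of $\Sigma$ (embedding $C = v_d(\nP^1)$ by $\sO_{\nP^1}(d)$, then taking secant lines) is $\PGL(2,\C)$-equivariant: an automorphism $g$ of $\nP^1$ acts linearly on $H^0(\nP^1,\sO_{\nP^1}(d))$, hence on $\nP^d$, and this linear action preserves $C$ and therefore its secant variety $\Sigma$. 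This produces a splitting $s \colon \PGL(2,\C) \to \Aut(\Sigma)$ with $r \circ s = \mathrm{id}$, so $r$ is surjective; it remains to prove $r$ is injective, i.e. that any $\varphi \in \Aut(\Sigma)$ acting trivially on $C$ is the identity.

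The key point for injectivity is that $\Aut(\Sigma)$ is in fact the restriction of a subgroup of $\PGL(d+1,\C)$. Because $\Sigma$ is $\Q$-factorial of Picard number $1$, the divisor class $H$ (the hyperplane class, equivalently the class with $-K_\Sigma \sim_{\Q} \tfrac{4}{d-2}H$) is the unique primitive ample generator of $\Cl(\Sigma)$ up to the obvious scaling, so $\varphi^* H \sim H$; since $\Sigma$ is projectively normal and linearly normal in $\nP^d$ (it is cut out by the relevant quadrics, as recorded in \cite{ENP1}), one has $H^0(\Sigma,\sO_\Sigma(H)) = H^0(\nP^d,\sO_{\nP^d}(1))$, so $\varphi$ is induced by an element of $\PGL(d+1,\C)$ preserving $\Sigma$, hence preserving $C = \Sing\Sigma$. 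Thus it suffices to show: an element $M \in \PGL(d+1,\C)$ that fixes the rational normal curve $C$ pointwise is the identity. This is a classical fact — $C = v_d(\nP^1)$ spans $\nP^d$ and contains $d+2$ points in general linear position (e.g. the images of $0,1,\infty$ and $d-1$ further points), and a projective transformation fixing $d+2$ points in general position in $\nP^d$ is the identity. Hence $M = \mathrm{id}$, so $\varphi = \mathrm{id}$, and $r$ is injective.

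Putting the two halves together, $r \colon \Aut(\Sigma) \to \PGL(2,\C)$ is an isomorphism, which is the assertion. I expect the main obstacle to be the bookkeeping in the middle step: namely, justifying cleanly that every abstract automorphism of $\Sigma$ is linearly induced from $\nP^d$. The cleanest route is to invoke that $H$ generates the class group (Picard number $1$, $\Q$-factorial, from \cite[Theorem 1.1]{ENP1}) together with linear normality of $\Sigma$ in $\nP^d$ (so that $|H|$ recovers exactly the hyperplane system), after which $\varphi^*\sO_\Sigma(H) \cong \sO_\Sigma(H)$ forces $\varphi$ to extend to $\PGL(d+1,\C)$; the rest is the elementary general-position argument for rational normal curves. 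One should also note the degenerate case $d = 3$ separately, where $\Sigma = \nP^3$ and $\Aut(\Sigma) = \PGL(4,\C) \supsetneq \PGL(2,\C)$, so the statement (and the hypothesis $d \geq 4$ implicit in this section) genuinely requires $d \geq 4$; in that range the singular locus $C$ is nonempty and the argument above applies verbatim.
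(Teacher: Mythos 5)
Your proposal is correct, and its first half coincides with the paper's: the restriction homomorphism $\gamma\colon\Aut(\Sigma)\to\Aut(C)$ is well defined because $C=\Sing\Sigma$, and surjectivity comes from the linear $\PGL(2,\C)$-action on $\nP^d=\nP H^0(\nP^1,\sO_{\nP^1}(d))$ preserving $C$, its bi-secant and tangent lines, and hence $\Sigma$. The injectivity half, however, takes a genuinely different route. The paper argues inside $\Sigma$: through each $x\in\Sigma\setminus C$ there is a unique bi-secant or tangent line $\ell$, which $\varphi$ fixes because $\varphi(\ell)\cap C=\ell\cap C$; then a hyperplane section $H$ with $H\cap\ell=\{x\}$ is fixed because it is determined by its trace on $C$, and $\{\varphi(x)\}=\varphi(H)\cap\varphi(\ell)=H\cap\ell$. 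You instead linearize the whole automorphism group: $\varphi^*\sO_\Sigma(H)\cong\sO_\Sigma(H)$ together with linear normality of $\Sigma\subseteq\nP^d$ shows $\varphi$ is induced by an element of $\PGL(d+1,\C)$ preserving $C$, and a projectivity fixing $d+2$ points in general position (any $d+2$ distinct points of a rational normal curve qualify) is the identity. Your route gives the stronger and conceptually cleaner statement that $\Aut(\Sigma)$ is a linear subgroup of $\PGL(d+1,\C)$, and in fact makes explicit an input the paper uses only implicitly when it deduces $\varphi(H)=H$ from $\varphi(H)\cap C=H\cap C$; the paper's route is more elementary in that it never invokes Picard-group or normality statements by name. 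Two small imprecisions you should repair: $H$ is in general not a generator of $\Cl(\Sigma)$ (for instance $\beta(\pi^{-1}(\ell))\sim_{\Q}\tfrac{2}{d-2}H$ for a line $\ell\subseteq\nP^2$), but all you need is $\varphi^*H\sim H$, which follows because $\varphi^*$ acts trivially on $N^1(\Sigma)\cong\R$ (it preserves ampleness and $(\varphi^*H)^3=H^3$) and the Picard group of a klt Fano variety is torsion-free; and linear normality is justified not by "cut out by quadrics" but by $h^0(\Sigma,\sO_\Sigma(1))=h^0(B,\sO_B(H))=h^0(\nP^2,E)=d+1$, or by the arithmetic normality of $\Sigma$ established in \cite{ENP1}. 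Your closing remark on $d=3$ is consistent with the paper, since the proposition is stated under the standing assumption $d\geq 4$ of Section \ref{sec:secvar}.
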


\begin{proof}
Since every automorphism of $\Sigma$ preserves $C=\Sing \Sigma$, the restriction map
$$
\gamma \colon \Aut(\Sigma) \longrightarrow \Aut(C),~~\varphi \longmapsto \varphi|_C
$$
is a well-defined group homomorphism. We show that $\gamma$ is an isomorphism. First, we check that $\gamma$ is injective. Take any $\varphi \in \ker(\gamma)$ so that $\varphi|_C = \operatorname{id}_C$, i.e.,  $\varphi(x) = x$ for all $x \in C$. It suffices to confirm that $\varphi(x)=x$ for any $x \in \Sigma \setminus C$. We can find a unique line $\ell$ in $\nP^d$ passing through $x$ such that $\ell$ is either a bi-secant line to $C$ or a tangent line to $C$. Then $\varphi(\ell)$ is a bi-secant line to $C$ or a tangent line to $C$, respectively. Since $\varphi|_C = \operatorname{id}_C$, it follows that $\ell \cap C=\varphi(\ell) \cap C $. Thus $ \ell = \varphi(\ell)$. Now, we can choose a hyperplane section $H$ of $\Sigma \subseteq \nP^d$ such that $H \cap \ell = \{ x\}$. Since $\varphi|_C = \operatorname{id}_C$, it follows that $H \cap C = \varphi(H) \cap C$. Thus $H = \varphi(H)$. Then we find
$$
\{\varphi(x)\} = \varphi(H \cap \ell) = \varphi(H) \cap \varphi(\ell) = H \cap \ell = \{x\}.
$$
Hence $\gamma$ is injective.

\medskip

Next, we check that $\gamma$ is surjective. The group action of $\Aut(\nP^1)$ on $\nP^1$ naturally induces a group action of $\Aut(\nP^1)$ on $\nP H^0(\nP^1, \sO_{\nP^1}(d))=\nP^d$. More precisely, the natural group action of $\operatorname{GL}(2, \C)$ on $H^0(\nP^1, \sO_{\nP^1}(1))$ uniquely determines the given group action of $\PGL(2, \C)$ on $\nP^1=\nP H^0(\nP^1, \sO_{\nP^1}(1))$. Then $\operatorname{GL}(2, \C)$ acts on $H^0(\nP^1, \sO_{\nP^1}(d))$ in such a way that $\varphi(u^{d-k}v^k) = \varphi(u)^{d-k}\varphi(v)^k$ for any $\varphi \in \operatorname{GL}(2, \C)$, where $u,v$ form a basis of $H^0(\nP^1, \sO_{\nP^1}(1))$. This group action induces a group action of $\PGL(2, \C)$ on $\nP H^0(\nP^1, \sO_{\nP^1}(d))$. In this way, we may regard $\Aut(\nP^1)$ as a subgroup $G$ of $\Aut(\nP^d)$ preserving the rational normal curve $C$ of degree $d$ in $\nP^d$. The restriction group homomorphism
$$
\mu \colon G \longrightarrow \Aut(C),~~\varphi \longmapsto \varphi|_C
$$
is an isomorphism. Every $\varphi \in G$ sends a bi-secant line (resp. a tangent line) to $C$ in $\nP^d$ to a bi-secant line (resp. a tangent line) to $C$ in $\nP^d$. This means that $\varphi|_{\Sigma} \in \Aut(\Sigma)$. As $\gamma(\varphi|_{\Sigma}) = \varphi|_C$, we can conclude that $\gamma$ is surjective. 
\end{proof}

\begin{proposition}\label{prop:Aut-orbits}
There are exactly three $\Aut(\Sigma)$-orbits: $C$, $T \setminus C$, and $\Sigma \setminus T$.
\end{proposition}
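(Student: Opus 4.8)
The plan is to exploit that $\Aut(\Sigma)=\PGL(2,\C)$ (Proposition \ref{prop:Aut(Sigma)}) acts on $\Sigma$ compatibly with the resolution $\beta\colon B=\nP(E)\to\Sigma$ and the fibration $\pi\colon B\to\nP^2$, and to read off the orbits fibrewise over $\nP^2$. First I would establish the equivariance. Since $\Aut(\Sigma)$ preserves $C=\Sing\Sigma$, it lifts uniquely to a $\PGL(2,\C)$-action on $B=\Bl_C\Sigma$ preserving the exceptional divisor $Z$. Every automorphism of $\Sigma$ carries secant (resp.\ tangent) lines of $C$ to secant (resp.\ tangent) lines (see the proof of Proposition \ref{prop:Aut(Sigma)}); as the fibres of $\pi$ are exactly the proper transforms of these lines, the lifted action permutes the fibres of $\pi$, hence descends to an action on $\nP^2$, which — agreeing on $C$ with the standard action of $\PGL(2,\C)$ on $\nP^1$ — is the natural action on $\nP^2=\operatorname{Sym}^2\nP^1$. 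In particular it preserves the conic $Q$ and therefore $\widetilde T=\pi^{-1}(Q)$, and $\beta$ is equivariant. Finally, $\beta$ restricts to an isomorphism $B\setminus Z\xrightarrow{\sim}\Sigma\setminus C$ carrying $\widetilde T\setminus Z$ onto $T\setminus C$, because $\beta^{-1}(T)=\widetilde T\cup Z$ set-theoretically (this follows from $T=\beta(\widetilde T)$ together with $C=\beta(Z)\subseteq\Sing T\subseteq T$). Thus $C$, $T\setminus C$ and $\Sigma\setminus T$ are $\Aut(\Sigma)$-invariant and partition $\Sigma$ (since $C\subseteq T\subseteq\Sigma$), and it suffices to prove that each of them is a single orbit.

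For $C$ this is immediate, as $C\cong\nP^1$ with the standard action. For the other two, by the isomorphism $\beta|_{B\setminus Z}$ it is enough to show that $\widetilde T\setminus Z$ and $B\setminus(\widetilde T\cup Z)$ are single $\PGL(2,\C)$-orbits. Now $\PGL(2,\C)$ has exactly two orbits on $\nP^2=\operatorname{Sym}^2\nP^1$: the conic $Q$ of double points $2p$, and its complement, the pairs $p+q$ with $p\neq q$. Since $\widetilde T\setminus Z=\pi^{-1}(Q)\cap(B\setminus Z)$ and $B\setminus(\widetilde T\cup Z)=\pi^{-1}(\nP^2\setminus Q)\cap(B\setminus Z)$, any $\PGL(2,\C)$-orbit contained in one of these sets lies over $Q$ or over $\nP^2\setminus Q$; so the whole statement reduces to a single claim: for $\xi\in\nP^2$, the stabilizer $\operatorname{Stab}(\xi)\subseteq\PGL(2,\C)$ acts transitively on the punctured fibre $\pi^{-1}(\xi)\setminus Z$, in each of the two cases $\xi=2p_0$ and $\xi=p_0+q_0$ with $p_0\neq q_0$.

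For these, recall that $\beta$ maps $\pi^{-1}(\xi)$ onto the line $L_\xi=\langle v_d(\xi)\rangle\subseteq\nP^d$ — the tangent line $\ell_{p_0}$ to $C$ at $v_d(p_0)$ when $\xi=2p_0$, and the secant line $\ell_{p_0,q_0}$ through $v_d(p_0),v_d(q_0)$ when $\xi=p_0+q_0$ — and restricts to an isomorphism $\pi^{-1}(\xi)\setminus Z\xrightarrow{\sim}L_\xi\setminus C$. Take $p_0=[1:0]$, $q_0=[0:1]$ and coordinates on $\nP^d=\nP(\operatorname{Sym}^d V)$ with $V=\langle e_1,e_2\rangle$, so that $v_d(p_0)=[e_1^d]$ and $v_d(q_0)=[e_2^d]$. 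Then the unipotent radical $\mathbf{G}_a$ of $\operatorname{Stab}(p_0)$, which fixes $e_1$ and sends $e_2\mapsto e_2+\gamma e_1$ (hence $e_1^{d-1}e_2\mapsto e_1^{d-1}e_2+\gamma e_1^{d}$), moves $[e_1^{d-1}e_2]$ through all of $\{[\gamma e_1^{d}+e_1^{d-1}e_2]:\gamma\in\C\}=\ell_{p_0}\setminus\{[e_1^{d}]\}=\ell_{p_0}\setminus C$; and the maximal torus of $\operatorname{Stab}(p_0+q_0)$, which fixes $e_1^{d}$ and sends $e_2^{d}\mapsto\lambda^{d}e_2^{d}$, moves $[e_1^{d}+e_2^{d}]$ through all of $\{[e_1^{d}+c\,e_2^{d}]:c\in\C^{*}\}=\ell_{p_0,q_0}\setminus C$, because $\lambda\mapsto\lambda^{d}$ is surjective on $\C^{*}$ over the algebraically closed field $\C$. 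This is exactly the required fibrewise transitivity, and the proof is complete.

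There is no real obstacle here; the two places that need a little care are the set-theoretic identity $\beta^{-1}(T)=\widetilde T\cup Z$ — needed so that $\widetilde T\setminus Z$ maps onto all of $T\setminus C$ rather than a proper subset — and the use of algebraic closedness in the secant case, where transitivity of the torus genuinely relies on $\lambda\mapsto\lambda^{d}$ being onto $\C^{*}$. One could instead run the argument entirely on $\Sigma$, using that a tangent (resp.\ secant) line to a rational normal curve of degree $d\geq4$ meets $C$ only at its point of tangency (resp.\ in exactly the two points it joins) and that distinct such lines are disjoint away from $C$; but the $\nP(E)$-description makes all of these incidence statements automatic, so I would present it through $B$ as above.
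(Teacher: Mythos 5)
Your proof is correct, and at its core it is the same argument as the paper's: reduce to a fibrewise statement using that $\PGL(2,\C)$ acts transitively on tangent lines and on bi-secant lines (equivalently, on non-reduced and on reduced length-two subschemes $\xi \subseteq C$), and then exhibit explicit subgroups of the stabilizer of such a line --- a unipotent $\mathbf{G}_a$ in the tangent case, the torus $\operatorname{diag}(1,\lambda)$ in the secant case (where, as you note, surjectivity of $\lambda\mapsto\lambda^d$ on $\C^*$ is used; the paper's subgroup $K$ relies on it in exactly the same way) --- acting transitively on the line minus its intersection with $C$. The only genuine difference is the packaging of the reduction: the paper stays on $\Sigma$ and identifies a bi-secant or tangent line as $\nP H^0(\xi,\sO_{\nP^1}(d)|_{\xi})$ via the restriction map $H^0(\nP^1,\sO_{\nP^1}(d))\to H^0(\xi,\sO_{\nP^1}(d)|_{\xi})$, which makes the fibrewise computation immediate with no blow-up at all, whereas you route everything through the equivariant geometry of $\beta\colon B=\nP(E)\to\Sigma$ and $\pi\colon B\to\nP^2=\operatorname{Sym}^2\nP^1$, using the two $\PGL(2,\C)$-orbits on $\nP^2$ and the identification of punctured fibres $\pi^{-1}(\xi)\setminus Z$ with $L_\xi\setminus C$. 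Your route costs a few extra verifications (the lift of the action to $B$, its descent to $\nP^2$, and the set-theoretic identity $\beta^{-1}(T)=\widetilde{T}\cup Z$), all of which you handle correctly, and in exchange it makes the incidence fact $\ell\cap C=\xi$ automatic and records equivariance statements about $B$, $Z$, $\widetilde{T}$ that the paper uses implicitly later (e.g.\ in Lemma \ref{lem:ord_FD}); the paper's version is shorter because the identification $\ell=\nP H^0(\xi,\sO_{\nP^1}(d)|_{\xi})$ accomplishes the same reduction in one line.
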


\begin{proof}
Clearly, every automorphism of $\Sigma$ preserves $T$ and $C$. As $\Aut(\Sigma)=\Aut(C)$ can permute both sets $\{\text{length two subschemes of $C$}\}$ and $\{\text{points of $C$}\}$, it suffices to show that if $\ell$ is a bi-secant line or a tangent line to $C$ and $x,y \in \ell \setminus C$, then $\varphi(x)=y$ for some $\varphi \in \Aut(\Sigma)$ with $\varphi(\ell)=\ell$. Recall that the group action of $\Aut(\Sigma)=\PGL(2,\C)$ on $\Sigma$ extends to a group action on $\nP^d=\nP H^0(\nP^1, \sO_{\nP^1}(d))$ which is induced from a group action of $\operatorname{GL}(2, \C)$ on $H^0(\nP^1, \sO_{\nP^1}(d))$. Let $u,v$ be a basis of $H^0(\nP^1, \sO_{\nP^1}(1))$. We may think that the group action of $\operatorname{GL}(2, \C)$ on $H^0(\nP^1, \sO_{\nP^1}(d))$ is given by
$$
\varphi(u^{d-k}v^k) = (\alpha u+ \gamma v)^{d-k} (\beta u+\delta v)^k~~\text{for}~~\varphi = \begin{bmatrix} \alpha & \beta \\ \gamma & \delta \end{bmatrix} \in \operatorname{GL}(2, \C).
$$
We may regard $u,v$ as coordinates of $\nP^1$. 

\medskip

Let $\xi:=C \cap \ell$ be the scheme-theoretic intersection. Then $\ell = \nP H^0(\xi, \sO_{\nP^1}(d)|_{\xi})$. Via the natural restriction map
$$
H^0(\nP^1, \sO_{\nP^1}(d)) \longrightarrow H^0(\xi, \sO_{\nP^1}(d)|_{\xi}),
$$
every element of $\Aut(\Sigma)$ gives an isomorphism between bi-secant or tangent lines
$$
\ell=\nP H^0(\xi, \sO_{\nP^1}(d)|_{\xi})  \longrightarrow \nP H^0(\varphi(\xi), \sO_{\nP^1}(d)|_{\varphi(\xi)})=\varphi(\ell).
$$

\medskip

Suppose that $\ell$ is a bi-secant line to $C$. We may assume that $\xi=[1,0]+[0,1]$. Then $u^d|_{\xi}, v^d|_{\xi}$ form a basis of $H^0(\xi, \sO_{\nP^1}(d)|_{\xi})$. If
$$
\varphi = \begin{bmatrix} 1 & 0 \\ 0 & a \end{bmatrix} ~~\text{for any}~~a \in \C^*,
$$
then $\varphi(\ell)=\ell$. Such $\varphi$ form a subgroup $K$ of $\operatorname{GL}(2, \C)$ which acts on $H^0(\xi, \sO_{\nP^1}(d)|_{\xi})$ by $\varphi(u^d|_{\xi}) = u^d|_{\xi}$ and $\varphi(v^d|_{\xi}) = a^d v^d|_{\xi}$.  This shows that $\ell \setminus \xi$ is a $K$-orbit, where we regard $K$ as a subgroup of $\PGL(2,\C)=\Aut(\Sigma)$.

\medskip

Suppose that $\ell$ is a tangent line to $C$. We may assume that $\xi=2[1,0]$. Then $u^d|_{\xi}, u^{d-1}v|_{\xi}$ form a basis of $H^0(\xi, \sO_{\nP^1}(d)|_{\xi})$. If
$$
\varphi = \begin{bmatrix} 1 & a \\ 0 & b \end{bmatrix} ~~\text{for any}~~a \in \C,~~b \in \C^*,
$$
then $\varphi(\ell)=\ell$. Such $\varphi$ form a subgroup $K'$ of $\operatorname{GL}(2, \C)$ which acts on $H^0(\xi, \sO_{\nP^1}(d)|_{\xi})$ by $\varphi(u^d|_{\xi}) = u^d|_{\xi}$ and $\varphi(u^{d-1}v|_{\xi}) = (au^d + bu^{d-1}v)|_{\xi}$.  This shows that $\ell \setminus \xi$ is a $K'$-orbit, where we regard $K'$ as a subgroup of $\PGL(2,\C)=\Aut(\Sigma)$.
\end{proof}

\section{K-polystability}\label{sec:K-poly}
In this section, we prove Theorem \ref{thm:main1}. We keep using the notations introduced in the previous sections. It is enough to show that the secant variety $\Sigma$ of a rational normal curve of degree $d \geq 4$ is K-polystable. For this purpose, we utilize Zhuang's result.

\begin{theorem}[{\cite[Corollary 4.14]{Zhuang}}]\label{thm:Zhuang}
Let $G$ be an algebraic group, and $X$ be a Fano variety with a $G$-action. If $G$ is reductive and $S_X(F) < A_X(F)$ for every $G$-invariant irreducible divisor $F$ over $X$, then $X$ is K-polystable.
\end{theorem}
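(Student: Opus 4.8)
The plan is to prove this through the valuative theory of K-stability combined with an equivariant reduction. The natural starting point is the valuative reformulation due to Fujita and Li (refined by Blum--Jonsson, \cite{BJ}): writing $\beta_X(F) := A_X(F) - S_X(F)$, a $\Q$-Fano variety $X$ is K-semistable exactly when $\beta_X(F) \geq 0$ for \emph{every} prime divisor $F$ over $X$, and K-polystable when in addition the only special test configurations attaining the boundary value $\beta = 0$ (equivalently, Donaldson--Futaki invariant zero) are products, i.e. arise from one-parameter subgroups of $\Aut(X)$. The hypothesis $S_X(F) < A_X(F)$, namely $\beta_X(F) > 0$, is assumed only on the $G$-invariant divisors, so the entire content is to pass from ``all $G$-invariant divisors'' to ``all divisors.'' The role of reductivity of $G$ is precisely to make this passage possible.

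First I would establish K-semistability. Suppose to the contrary that $X$ is not K-semistable, so $\delta(X) < 1$. By the resolution of the optimal destabilization problem (Liu--Xu--Zhuang \cite{LXZ}, building on Blum and Xu), the stability threshold is computed by a valuation minimizing $\beta$, and this minimizer is \emph{unique} up to scaling, with finitely generated associated graded ring. Uniqueness is the crucial leverage: since the minimizer is a canonical object attached to $X$, it must be fixed by the whole of $\Aut(X)$, and in particular by $G$; hence it is $G$-invariant. Passing to the corresponding divisorial (or quasi-monomial, then approximated via an Izumi-type estimate) valuation produces a $G$-invariant prime divisor $F$ over $X$ with $\beta_X(F) \leq 0$, contradicting the strict inequality in the hypothesis. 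Therefore $X$ is K-semistable.

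Next I would upgrade semistability to polystability. By the valuative/test-configuration criterion it suffices to show that any special test configuration $\mathcal{X}$ with Donaldson--Futaki invariant zero is a product. The theory of K-polystable degenerations (Li--Wang--Xu \cite{LWX}, Blum--Xu) guarantees that such a degeneration is driven by a valuation $v$ with $\beta_X(v) = 0$, and that the degeneration can be taken $G$-equivariant because $G$ is reductive; invoking uniqueness of the minimizing/degenerating data again, one may assume $v$ is $G$-invariant. Now the strict inequality hypothesis forbids $v$ from being a nontrivial $G$-invariant divisorial valuation, so $v$ must instead be induced by a one-parameter subgroup of $G$ itself; that is, $\mathcal{X}$ is a product. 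This is exactly the defining condition for K-polystability, completing the argument.

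The hard part will be the equivariant reduction underlying both steps: converting ``strict inequality on $G$-invariant divisors'' into a genuine obstruction against \emph{arbitrary} destabilizers. This rests on the uniqueness of the minimizing valuation and finite generation of its associated graded ring --- the higher-rank valuative analogue of the Hilbert--Mumford criterion --- which force the optimal destabilizer, being canonical, to be automatically $G$-invariant whenever $G$ is reductive. The genuinely delicate point is the boundary case $\beta = 0$ in the polystable step, where one must distinguish honest product degenerations (which are permitted) from $G$-invariant nonproduct ones (which the strict inequality excludes); it is exactly here that reductivity of $G$, and the averaging/uniqueness it enables, is indispensable.
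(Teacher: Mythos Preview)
The paper does not prove this theorem at all: it is quoted verbatim as \cite[Corollary 4.14]{Zhuang} and used as a black box in the proof of Theorem~\ref{thm:main1}. There is therefore no ``paper's own proof'' to compare your proposal against.

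That said, your sketch is broadly aligned with how Zhuang actually establishes the result, via uniqueness of optimal destabilizing data forcing $G$-invariance. A couple of points deserve care. First, attributing the uniqueness of the $\delta$-minimizing valuation to \cite{LXZ} is somewhat circular chronologically: this uniqueness (for $\delta<1$) is essentially what Zhuang proves in the very paper you are trying to reconstruct, and \cite{LXZ} builds on it rather than the reverse. Second, your K-polystability step has a genuine soft spot. Once you know $X$ is K-semistable, i.e.\ $\delta(X)=1$, there is no longer a \emph{unique} valuation with $\beta=0$: every one-parameter subgroup of $\Aut^0(X)$ already gives a product test configuration with vanishing Futaki invariant, so ``invoking uniqueness of the minimizing/degenerating data again'' is not available as stated. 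Zhuang's actual argument at this stage does not appeal to uniqueness of a single valuation but rather analyzes the structure of the locus of $\beta=0$ valuations and shows, using reductivity, that any non-product special degeneration would produce a $G$-invariant divisorial valuation with $\beta\le 0$ that is \emph{not} induced by $\Aut^0(X)$, contradicting the strict hypothesis. Your final paragraph gestures at this distinction but the mechanism you describe (``$v$ must be induced by a one-parameter subgroup of $G$ itself'') is asserted rather than derived.
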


Here we briefly recall basic necessary definitions. Let $X$ be a \emph{Fano variety}, which means that $X$ has log terminal singularities and $-K_X$ is ample. Recall that
$$
S_X(F):=\frac{1}{(-K_X)^{\dim X}} \int_0^{\infty} \vol(-K_X - xF) dx,
$$
and $A_X(F)$ is the log discrepancy of $F$. For any divisor $D$ on $X$ with $\dim X = n$, we define the \emph{volume} of $X$ as
$$
\vol (D)=\vol_X(D):=\limsup_{m \to \infty} \frac{h^0(X, \lfloor mD \rfloor)}{m^n/n!}.
$$
When $F$ is a divisor on $\widetilde{X}$, where $f \colon \widetilde{X} \to X$ is a birational morphism, we set
$$
\vol(-K_X - xF):=\vol_{\widetilde{X}}(f^*(-K_X)-xF).
$$
The \emph{pseudoeffective threshold} of $F$ is defined as
$$
\tau_X(F):=\sup\{ \tau \in \mathbf{R}_{>0}| \vol (-K_X - \tau F)>0 \}.
$$
The \emph{alpha invariant} of $X$ is
$$
\alpha(X):=\inf \{\lct(X, D) \mid -K_X \sim_{\Q} D \geq 0\} = \inf_F \frac{A_X(F)}{\tau_X(F)},
$$
where the infimum takes over prime divisors $F$ over $X$ (\cite[Theorem C]{BJ}). Here the \emph{log canonical threshold} of $(X, D)$ is defined as
$$
\lct(X, D):=\inf\{ c>0 \mid \text{$(X, cD)$ is a log canonical pair}\}.
$$
The \emph{delta invariant} of $X$ is
$$
\delta(X):=\inf_F \frac{A_X(F)}{S_X(F)},
$$
where the infimum takes over prime divisors $F$ over $X$ (\cite[Theorem C]{BJ}). When an algebraic group $G$ acts on $X$, one can also define the $G$-equivariant alpha invariant $\alpha_G(X)$ and the $G$-equivariant delta invariant $\delta_G(X)$ using $G$-invariant divisors, (see \cite{Zhuang} for more details).

\medskip

In view of Theorem \ref{thm:Zhuang}, we need to check that $S_{\Sigma}(F) < A_{\Sigma}(F)$ for every $\Aut(\Sigma)$-invariant  irreducible divisor $F$ over $\Sigma$. Proposition \ref{prop:Aut-orbits} says that if $F$ is an $\Aut(\Sigma)$-invariant  irreducible divisor  over $\Sigma$, then $F=T$ or $c_{\Sigma}(F)=C$, where $c_{\Sigma}(F)$ is the center of $F$ on $\Sigma$. The former case is easy to confirm.

\begin{lemma}\label{lem:A(T)S(T)}
$S_{\Sigma}(T)=1/4$ and $A_{\Sigma}(T)=1$.
\end{lemma}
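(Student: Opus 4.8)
The key observation is that $T$ is an anticanonical \emph{prime} divisor living on $\Sigma$ itself, and this trivializes both computations. For $A_{\Sigma}(T)$ I would first record that $T=\beta(\widetilde{T})$ with $\widetilde{T}\cong\nP^1\times\nP^1$ (see Section \ref{sec:secvar}), so $T$ is an irreducible reduced divisor on $\Sigma$, and its center on $\Sigma$ is $T$ itself. Hence in the definition of the log discrepancy I may take the birational morphism to be $\operatorname{id}_{\Sigma}$; since $\Sigma$ is $\Q$-factorial, $K_{\Sigma}$ is $\Q$-Cartier and
$$
A_{\Sigma}(T)=1+\ord_T\bigl(K_{\Sigma}-K_{\Sigma}\bigr)=1.
$$

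For $S_{\Sigma}(T)$ I would use that $T\sim -K_{\Sigma}$ by Proposition \ref{prop:beta^*T}, so $-K_{\Sigma}-xT\sim_{\Q}(1-x)(-K_{\Sigma})$ for every $x\geq 0$. The volume is a numerical invariant, homogeneous of degree $\dim\Sigma=3$; therefore $\vol(-K_{\Sigma}-xT)=(1-x)^3(-K_{\Sigma})^3$ for $0\leq x\leq 1$, while for $x>1$ the class $(1-x)(-K_{\Sigma})$ is anti-ample, hence not pseudoeffective, so its volume vanishes (equivalently $\tau_{\Sigma}(T)=1$). Plugging into the definition,
$$
S_{\Sigma}(T)=\frac{1}{(-K_{\Sigma})^3}\int_0^1 (1-x)^3(-K_{\Sigma})^3\,dx=\int_0^1 (1-x)^3\,dx=\frac14.
$$

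There is essentially no obstacle here — the whole point is that $T$ being anticanonical makes the volume function completely explicit, bypassing any log resolution. The only details worth spelling out are that $T$ is prime (so that the log-discrepancy computation with $\operatorname{id}_{\Sigma}$ is legitimate) and that $\vol(-K_{\Sigma}-xT)$ may be computed directly on $\Sigma$, since the volume of a $\Q$-Cartier divisor depends only on its numerical class and is insensitive to birational pullback.
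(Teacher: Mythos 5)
Your proposal is correct and follows essentially the same (one-line) argument as the paper: since $T\sim -K_{\Sigma}$ is a prime divisor on $\Sigma$ itself, $A_{\Sigma}(T)=1$ is immediate and $S_{\Sigma}(T)=\int_0^1(1-x)^3\,dx=1/4$; you merely spell out the justifications (primeness of $T$, homogeneity and numerical invariance of the volume) that the paper leaves implicit.
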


\begin{proof}
We have $S_{\Sigma}(T)=\int_0^1 (1-x)^3 dx = 1/4$. Clearly, $A_{\Sigma}(T)=1$.
\end{proof}

Henceforth, we focus on the latter case: $c_{\Sigma}(F)=C$. For this case, we use a variant of Fujita's result.

\begin{proposition}[{cf. \cite[Proposition 3.2]{Fujita2}}]\label{prop:Fujita}
Let $X$ be an $n$-dimensional Fano variety, $\eta \in X$ be a scheme-theoretic point, $F$ be an irreducible divisor over $X$ with $c_X(F)=\overline{\eta}$, and $0 < t \leq s$ be positive real numbers. Assume the following:\begin{enumerate}
\item There is a prime divisor $D_0$ on $X$ with $D_0 \sim_{\Q} -K_X$ such that $\lct(X, D_0)_{\eta} \geq t$.
\item $\ord_F D \leq s^{-1} A_X(F)$ for any effective $\Q$-divisor $D \sim_{\Q} -K_X$ with $D_0 \not\subseteq \Supp (D)$.
\end{enumerate}
Then we have
$$
S_X(F) \leq \frac{A_X(F)}{n+1} \left( \frac{n-1}{s} + \frac{1}{t} \right).
$$
\end{proposition}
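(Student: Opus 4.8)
The plan is to adapt the proof of Fujita's result \cite[Proposition 3.2]{Fujita2}, modified to allow the two parameters $t\le s$ to differ. Write $V:=(-K_X)^{n}$ and $a:=A_X(F)$, fix a birational morphism $f\colon\widetilde{X}\to X$ on which $F$ is a divisor, and recall
$$
S_X(F)=\frac{1}{V}\int_{0}^{\tau}\vol\bigl(f^{*}(-K_X)-xF\bigr)\,dx,\qquad\tau:=\tau_X(F).
$$
The first step is to show $\tau\le a/t$. Given an effective $\Q$-divisor $D\sim_{\Q}-K_X$, let $c$ be the coefficient of the prime divisor $D_0$ in $D$; since $D-cD_0\sim_{\Q}(1-c)(-K_X)$ is effective we have $c\le 1$, and $D_0\not\subseteq\Supp(D-cD_0)$. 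When $c<1$, hypothesis~(2) applied to $\tfrac{1}{1-c}(D-cD_0)\sim_{\Q}-K_X$ gives $\ord_F(D-cD_0)\le\tfrac{(1-c)a}{s}$, and this holds trivially when $c=1$; hypothesis~(1) gives $\ord_F D_0\le a/t$, because $\eta\in c_X(F)$ and $\lct(X,D_0)_{\eta}\ge t$ force $A_X(F)\ge t\cdot\ord_F D_0$. Hence $\ord_F D\le c\cdot\tfrac{a}{t}+(1-c)\cdot\tfrac{a}{s}\le\tfrac{a}{t}$ by $t\le s$, and since $\tau_X(F)=\sup\{\ord_F D:0\le D\sim_{\Q}-K_X\}$ this gives $\tau\le a/t$; in particular the integrand above vanishes for $x\ge a/t$.

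The core of the argument is a pointwise upper bound for $\phi(x):=\vol\bigl(f^{*}(-K_X)-xF\bigr)$ on $[0,\tau]$, obtained by partitioning $[0,\tau]$ according to which effective anticanonical divisor best controls $\phi$; the thresholds that appear are $v_0:=\ord_F D_0\ (\le a/t)$ and $a/s$. For $x\le v_0$ one has $f^{*}(-K_X)-xF\sim_{\Q}\widetilde{D_0}+(v_0-x)F+G$, where $f^{*}D_0=\widetilde{D_0}+v_0F+G$ splits into strict transform, $F$, and remaining $f$-exceptional part; here irreducibility of $\widetilde{D_0}$ (from primality of $D_0$) is used to estimate the growth of $\phi$ as $x$ decreases from $v_0$, in terms of a restricted volume along $F$. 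For $v_0\le x\le\tau$---which occurs only when $\tau\le a/s$---one uses instead that a general member of $|-K_X|_{\Q}$ avoids the fixed prime $D_0$, hence vanishes along $F$ to order at most $a/s$, which bounds the movable part of $f^{*}(-K_X)-xF$. These local estimates are then combined with the standard properties of the volume function---$\phi\le V$, $\phi$ non-increasing, $\phi(\tau)=0$, concavity of $\phi^{1/n}$, and the identity $\tfrac{d}{dx}\phi(x)=-n\,\vol_{\widetilde{X}|F}\bigl(f^{*}(-K_X)-xF\bigr)$---following the computation in the proof of \cite[Proposition 3.2]{Fujita2}.

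Finally one substitutes these bounds into $S_X(F)=\tfrac1V\int_0^{\tau}\phi(x)\,dx$, evaluates the resulting elementary (Beta-type) integrals, and checks that they sum to at most $\tfrac{a}{n+1}\bigl(\tfrac{n-1}{s}+\tfrac1t\bigr)$. I expect the main obstacle to be precisely the volume estimates of the previous paragraph---turning the qualitative singularity hypotheses (1) and (2) into quantitative decay rates for $\phi$---and in particular calibrating the thresholds and exponents so that the integrals combine to exactly this expression; the bound is essentially sharp, since in the application to $\Sigma$ (where $n=3$, $t=\tfrac{d+2}{2d}$, $s=1$) it yields $S_{\Sigma}(F)<A_{\Sigma}(F)$ with the gap tending to $0$ as $d\to\infty$.
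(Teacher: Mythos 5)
Your preliminary step is fine: the convex decomposition $D=cD_0+(1-c)D'$ with $D_0\not\subseteq\Supp(D')$, hypothesis (2) applied to $D'$, and the observation that (1) gives $\ord_F D_0\le A_X(F)/t$ (because $c_X(F)=\overline{\eta}$), do yield $\tau_X(F)\le A_X(F)/t$. But the proposal stops exactly where the proof has to begin: the quantitative upper bound on $\vol(-K_X-xF)$ is never stated, let alone proved. You defer it to ``following the computation in \cite{Fujita2}'' and yourself call it ``the main obstacle,'' yet producing that decay estimate, with the two parameters $t\le s$ correctly calibrated, \emph{is} the content of the proposition. Note also that the bound $\tau_X(F)\le A_X(F)/t$ alone is too weak: it only gives $S_X(F)\le\frac{n}{n+1}\cdot\frac{A_X(F)}{t}$, which exceeds the claimed bound whenever $t<s$ and $n\ge 2$.

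The actual mechanism, which your sketch does not identify, is a dichotomy on $v_0:=\ord_F D_0$ versus $s^{-1}A_X(F)$. If $v_0\le s^{-1}A_X(F)$, the same convex decomposition gives $\tau_X(F)\le s^{-1}A_X(F)$, and one concludes from $S_X(F)\le\frac{n}{n+1}\tau_X(F)$ (\cite[Proposition 2.1]{Fujita1}) together with $t\le s$. If $v_0>s^{-1}A_X(F)$, then $\tau_X(F)=v_0$, and for every $x\in(s^{-1}A_X(F),v_0)$ the decomposition forces any effective $D\sim_{\Q}-K_X$ with $\ord_F D\ge x$ to contain $D_0$ with coefficient at least $\frac{x-s^{-1}A_X(F)}{v_0-s^{-1}A_X(F)}$; hence this multiple of $D_0$ is a fixed divisor of $|-mK_X-mxF|$, and removing it rescales the class to give the exact identity $\vol(-K_X-xF)=\bigl(\frac{v_0-x}{v_0-s^{-1}A_X(F)}\bigr)^{n}\vol\bigl(-K_X-s^{-1}A_X(F)F\bigr)$, after which \cite[Proposition 3.1]{Fujita2} yields $S_X(F)\le\frac{1}{n+1}\bigl(\frac{n-1}{s}A_X(F)+v_0\bigr)\le\frac{A_X(F)}{n+1}\bigl(\frac{n-1}{s}+\frac{1}{t}\bigr)$. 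Your partition assigns the ``general member avoids $D_0$'' argument to the interval $[v_0,\tau_X(F)]$, which in the only nontrivial case ($v_0>s^{-1}A_X(F)$) degenerates to a point, while on $[0,v_0]$ you propose unspecified restricted-volume estimates based on irreducibility of the strict transform of $D_0$ that make no use of hypothesis (2) at all; as written this cannot produce the required decay rate on $(s^{-1}A_X(F),v_0)$, and the final ``calibration of thresholds and exponents'' is explicitly left unverified. So the key idea is missing, and the proof as proposed does not go through.
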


\begin{proof}
Even though this was essentially shown in \cite[Proof of Proposition 3.2]{Fujita2}, we include the whole proof for reader's convenience. By Condition $(1)$, $\ord_F D_0 \leq t^{-1} A_X(F)$. First, assume that $\ord_F D_0 \leq s^{-1} A_X(F)$. Then, together with Condition $(2)$, we get $\ord_F D \leq s^{-1} A_X(F)$ for every effective $\Q$-divisor $D \sim_{\Q} -K_X$. Thus $s \leq A_X(F)/\tau_X(F)$.
By \cite[Proposition 2.1]{Fujita1}, we obtain
$$
S_X(F) \leq \frac{n}{n+1} \tau_X(F) \leq \frac{n}{n+1} s^{-1} A_X(F) \leq \frac{A_X(F)}{n+1} \left( \frac{n-1}{s} + \frac{1}{t} \right).
$$
Next, assume that $\ord_F D_0 > s^{-1} A_X(F)$. Take a log resolution $f \colon \widetilde{X} \to X$ of $X$ such that $F$ is a divisor on $\widetilde{X}$. For any effective $\Q$-divisor $D \sim_{\Q} -K_X$, there is $e \in [0,1] \cap \Q$ such that $D=eD_0 + (1-e)D'$ with $-K_X \sim_{\Q} D' \geq 0$ and $D_0 \not\subseteq \Supp(D')$. By Condition $(2)$, we have $\ord_F D' \leq s^{-1} A_X(F)$. Choose $x \in (s^{-1}A_X(F), \ord_F D_0) \cap \Q$ such that $\ord_F D \geq x$. Since
$$
x \leq \ord_F D \leq e \ord_F D_0 + (1-e)s^{-1}A_X(F),
$$
it follows that
$$
d \geq \frac{x-s^{-1}A_X(F)}{\ord_F D_0 - s^{-1}A_X(F)}.
$$
This implies that $|-mK_X - mxF|$ has a fixed divisor
$$
m \left( \frac{x-s^{-1}A_X(F)}{\ord_F D_0 - s^{-1}A_X(F)} \right) D_0
$$
for a sufficiently divisible integer $m > 0$. Thus we have
$$
\begin{array}{l}
\vol(-K_X - xF)\\[10pt]
\displaystyle = \vol \left( -f^*K_X - xF - \frac{x-s^{-1}A_X(F)}{\ord_F D_0 - s^{-1}A_X(F)} (f^* D_0 - \ord_F D_0)F \right)\\[10pt]
\displaystyle  = \left( \frac{\ord_F D_0 - x}{\ord_F D_0 - s^{-1}A_X(F)}\right)^n \vol(-K_X - s^{-1}A_X(F)F).
\end{array} 
$$
By \cite[Proposition 3.1]{Fujita2}, we obtain
$$
S_X(F) \leq \frac{1}{n+1} \left(\frac{n-1}{s} A_X(F) + \ord_F D_0 \right) \leq \frac{A_X(F)}{n+1} \left( \frac{n-1}{s} + \frac{1}{t} \right).
$$
This completes the proof.
\end{proof}

The next two lemmas allow us to confirm $S_{\Sigma}(F) < A_{\Sigma}(F)$ for every $\Aut(\Sigma)$-invariant irreducible divisor $F$ over $\Sigma$ with $c_\Sigma(F)=C$ by applying Proposition \ref{prop:Fujita}. To this end, we construct a log resolution of the log pair $(\Sigma, T)$. Recall that $\widetilde{T}=\beta_*^{-1} T$ and $Z$ meet along the diagonal $\Delta=\nP^1$ with multiplicity $2$. Let $\beta_1 \colon B_1 \to B$ be the blow-up of $B$ along $\Delta$ with the exceptional divisor $E_1$. Then $\widetilde{T}_1:=\beta_{1,*}^{-1} \widetilde{T}, Z_1:=\beta_{1,*}^{-1} Z, E_1$ meet along a curve $\Delta_1=\nP^1$. Let $\beta_2 \colon B_2 \to B_1$ be the blow-up of $B_1$ along $\Delta_1$ with the exceptional divisor $E_2$. Then the union of $\widetilde{T}_2:=\beta_{2,*}^{-1} \widetilde{T}_1, Z_2:=\beta_{2,*}^{-1} Z_2, \widetilde{E}_1:=\beta_{2,*}^{-1} E_1, E_2$ is simple normal crossing, so  $f:=\beta_2 \circ \beta_1 \circ \beta$ is a log resolution of the log pair $(\Sigma, T)$. 

\begin{lemma}\label{lem:lct}
$\lct(\Sigma, T) = (d+2)/(2d)$.
\end{lemma}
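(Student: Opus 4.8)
The plan is to run the standard log canonical threshold computation on the log resolution $f = \beta_2 \circ \beta_1 \circ \beta \colon B_2 \to \Sigma$ of $(\Sigma, T)$ built above. Writing $f^*(K_\Sigma + cT) = K_{B_2} + \sum_i a_i(c)\,F_i$, where $F_i$ runs over the strict transform $\widetilde T_2$ of $T$ and the three $f$-exceptional divisors $Z_2$, $\widetilde E_1$, $E_2$, one has $a_i(c) = \bigl(1 - A_\Sigma(F_i)\bigr) + c\,\ord_{F_i}(T)$, so
$$
\lct(\Sigma, T) \;=\; \sup\{\, c>0 : a_i(c)\le 1 \text{ for all } i \,\} \;=\; \min_i \frac{A_\Sigma(F_i)}{\ord_{F_i}(T)}.
$$
Thus everything reduces to pulling back $K_\Sigma$ and $T$ through the two blow-ups $\beta_1$ and $\beta_2$.

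First I would compute the discrepancies. Since $\beta_1$ and $\beta_2$ are blow-ups of smooth curves in smooth threefolds, $K_{B_1} = \beta_1^* K_B + E_1$ and $K_{B_2} = \beta_2^* K_{B_1} + E_2$; combining these with $\beta^* K_\Sigma = K_B + \tfrac{d-4}{d-2}Z$ (Proposition~\ref{prop:-K_{Sigma}}) and the pullback formulas $\beta_1^* Z = Z_1 + E_1$, $\beta_2^* Z_1 = Z_2 + E_2$, $\beta_2^* E_1 = \widetilde E_1 + E_2$ gives
$$
K_{B_2} = f^* K_\Sigma - \tfrac{d-4}{d-2}Z_2 + \tfrac{2}{d-2}\widetilde E_1 + \tfrac{4}{d-2}E_2,
$$
hence $A_\Sigma(Z)=\tfrac{2}{d-2}$, $A_\Sigma(E_1)=\tfrac{d}{d-2}$, $A_\Sigma(E_2)=\tfrac{d+2}{d-2}$, and $A_\Sigma(T)=1$. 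Next, starting from $\beta^* T = \widetilde T + \tfrac{2}{d-2}Z$ (Proposition~\ref{prop:beta^*T}) and using $\beta_1^*\widetilde T = \widetilde T_1 + E_1$, $\beta_2^*\widetilde T_1 = \widetilde T_2 + E_2$ along with the same formulas for $Z$ and $E_1$, I would obtain
$$
f^* T = \widetilde T_2 + \tfrac{2}{d-2}Z_2 + \tfrac{d}{d-2}\widetilde E_1 + \tfrac{2d}{d-2}E_2.
$$
Comparing term by term, the conditions $a_i(c)\le 1$ read $c\le 1$ from each of $\widetilde T_2$, $Z_2$, $\widetilde E_1$, and $c\le \tfrac{d+2}{2d}$ from $E_2$; since $\tfrac{d+2}{2d}<1$ for $d\ge 4$, this yields $\lct(\Sigma, T)=\tfrac{d+2}{2d}$, the threshold being computed by $E_2$.

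The one point that is not purely formal, and which I would handle carefully, is the claim that every exceptional multiplicity occurring in the pullback formulas equals $1$: equivalently, that $\widetilde T$ and $Z$ are smooth along $\Delta$, that $\widetilde T_1$, $Z_1$, $E_1$ are smooth along $\Delta_1$, and that each of these divisors contains the center of the subsequent blow-up. Smoothness is immediate, since $\widetilde T = \nP(E|_Q)\cong\nP^1\times\nP^1$, $Z\cong\nP^1\times\nP^1$, and $E_1$ is a $\nP^1$-bundle over $\Delta$; and each center lies on the relevant divisors because it is, by construction, an intersection of them. To make this fully transparent I would record a local analytic model near a general point of $\Delta$ in which $\widetilde T = \{y=0\}$ and $Z = \{y=x^2\}$ — this encodes precisely the stated fact that $\widetilde T$ and $Z$ meet along $\Delta$ with multiplicity $2$, i.e.\ are tangent there — and then blow up twice explicitly. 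This simultaneously verifies the multiplicities, explains why two blow-ups (rather than one) are required, and re-confirms that $f$ resolves $(\Sigma, T)$ to a simple normal crossing divisor. That short local computation is the only genuinely hands-on step; the rest is arithmetic with the coefficients above.
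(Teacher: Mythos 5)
Your proposal is correct and follows essentially the same route as the paper: the same two-step log resolution $f=\beta_2\circ\beta_1\circ\beta$, the same pullback coefficients for $K_\Sigma$ and $T$ (all of which match the paper's), and the same conclusion that the threshold $\tfrac{d+2}{2d}$ is computed by $E_2$. Your extra local-model check ($\widetilde T=\{y=0\}$, $Z=\{y=x^2\}$) just makes explicit the multiplicity-one and tangency facts the paper uses implicitly, so it is a welcome verification rather than a different argument.
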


\begin{proof}
We have
$$
\begin{array}{lcl}
K_B = \beta^* K_{\Sigma} - \frac{d-4}{d-2} Z ~\text{(Proposition \ref{prop:-K_{Sigma}})}& \text{and} & \beta^*T = \widetilde{T} + \frac{2}{d-2} Z~\text{(Proposition \ref{prop:beta^*T})};\\[10pt]
 K_{B_1} = (\beta_1 \circ \beta)^* K_{\Sigma} - \frac{d-4}{d-2} Z_1 + \frac{2}{d-2} E_1 & \text{and} & (\beta_1 \circ \beta)^* T = \widetilde{T}_1 + \frac{2}{d-2} Z_1 + \frac{d}{d-2} E_1;\\[10pt]
 K_{B_2}=f^* K_{\Sigma} - \frac{d-4}{d-2} Z_2 + \frac{2}{d-2} \widetilde{E}_1 + \frac{4}{d-2} E_2 & \text{and} & f^* T = \widetilde{T}_2 + \frac{2}{d-2} Z_2 + \frac{d}{d-2} \widetilde{E}_1 + \frac{2d}{d-2} E_2.
\end{array}
$$
Then
$$
\lct(\Sigma, T) = \min \left\{\frac{A_{\Sigma}(T)}{\ord_{T} T}, \frac{A_{\Sigma}(Z)}{\ord_{Z} T},  \frac{A_{\Sigma}(E_1)}{\ord_{E_1} T}, \frac{A_{\Sigma}(E_2)}{\ord_{E_2} T} \right\} = \frac{A_{\Sigma}(E_2)}{\ord_{E_2} T} = \frac{1+\frac{4}{d-2}}{\frac{2d}{d-2}} =  \frac{d+2}{2d}.
$$
We finish the proof.
\end{proof}

\begin{lemma}\label{lem:ord_FD}
Let $F$ be an $\Aut(\Sigma)$-invariant irreducible divisor over $\Sigma$ with $c_{\Sigma}(F)=C$. Then we have
$$
\ord_F D \leq  A_{\Sigma}(F)
$$
for any effective $\Q$-divisor $D \sim_{\Q} -K_{\Sigma}$ with $T \not\subseteq \Supp (D)$.
\end{lemma}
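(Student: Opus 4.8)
The plan is to reduce, via the classification of $\Aut(\Sigma)$‑orbits (Proposition \ref{prop:Aut-orbits}) and an analysis of $\Aut(\Sigma)$‑invariant divisorial valuations, to a bound on $\ord_F D$ for $F$ a monomial‑type valuation centred at the diagonal $\Delta\subseteq Z$, and then to conclude by a direct estimate that exploits the hypothesis $T\not\subseteq\Supp(D)$ twice.

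First I would split into cases according to the center $c_B(F)$ on $B=\nP(E)$. Since $\beta$ contracts $Z$ to $C$, the set $c_B(F)$ is an $\Aut(\Sigma)$‑invariant irreducible subvariety of $Z=\nP^1\times\nP^1$; as $\Aut(\Sigma)=\PGL(2,\C)$ acts diagonally on $Z$ (compatibly with $\sigma$), its only invariant irreducible closed subvarieties are $Z$ itself and $\Delta$. If $F=Z$: writing $\beta^*D=\widetilde D+(\ord_Z D)Z$ with $\widetilde D:=\beta_*^{-1}D\geq 0$, intersecting with a general fiber $\ell$ of $\pi$ and using $Z\sim 2H-(d-2)\pi^*A$ gives $0\leq \widetilde D\cdot\ell=\frac{4}{d-2}-2\ord_Z D$, so $\ord_Z D\leq\frac{2}{d-2}=A_\Sigma(Z)$. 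From now on $c_B(F)=\Delta$.

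For $c_B(F)=\Delta$ I would record two structural facts. \emph{(a)} $\Aut(\Sigma)$‑invariance pins down $\ord_F$: iterating over $\Aut(\Sigma)$‑invariant centers over $\Delta$ — using that $\PGL(2,\C)$ acts transitively on $\Delta$ (so there are no invariant points over $\Delta$) and that the $\PGL(2,\C)$‑equivariant normal bundle of $\Delta$ in $B$ has a \emph{unique} equivariant line subbundle, namely the common tangent direction of $\widetilde T$ and $Z$ along $\Delta$ — one sees that $\ord_F$ is the monomial valuation attached to local coordinates $(x,y,z)$ around a general point of $\Delta$ in which $Z=\{z=0\}$, $\widetilde T=\{z=y^2\}$, $\Delta=\{y=z=0\}$ (with weights either on $(y,z)$, or on $(y,z-y^2)$ when $F$ sits ``on the $\widetilde T$ side''), and that in every case $a:=\ord_F(y)>0$ and $b:=\ord_F(z)\geq a$. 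Hence $\ord_F Z=b$, $A_B(F)\geq a+b$, and $K_B=\beta^*K_\Sigma-\frac{d-4}{d-2}Z$ gives $A_\Sigma(F)=A_B(F)-\frac{d-4}{d-2}\ord_F Z\geq a+\frac{2}{d-2}b$. \emph{(b)} Since $\widetilde T=\pi^{-1}(Q)$ is $\pi$‑vertical, write $\widetilde D=\widetilde D_0+\pi^*G$, where $\widetilde D_0$ collects the non‑$\pi$‑vertical components; then $T\not\subseteq\Supp(D)$ forces $\widetilde T\not\subseteq\Supp\widetilde D_0$ and $Q\not\subseteq\Supp G$, so (as $\pi(\Delta)=Q$) $\ord_F(\pi^*G)=0$ and $\ord_F D=\ord_F\widetilde D_0+m\,\ord_F Z$ with $m:=\ord_Z D$.

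Then I would combine these. The fiber argument gives $m\leq\frac{2}{d-2}$ and $\widetilde D_0\cdot\ell=\frac{4}{d-2}-2m$; since $\Delta$ is a section of $\widetilde T=\pi^{-1}(Q)\to Q$ and $\widetilde T\not\subseteq\Supp\widetilde D_0$, this yields $\mult_\Delta(\widetilde D_0|_{\widetilde T})\leq\widetilde D_0\cdot\ell=\frac{4}{d-2}-2m$. Comparing term by term the local equation of $\widetilde D_0$ with its (nonzero) restriction to $\widetilde T$ bounds $\ord_F\widetilde D_0\leq\max(a,b/2)\cdot\mult_\Delta(\widetilde D_0|_{\widetilde T})\leq\max(a,b/2)\big(\frac{4}{d-2}-2m\big)$. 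Feeding this in, together with $\ord_F Z=b$ and $\max(2a,b)\geq b$, gives $\ord_F D\leq\frac{4}{d-2}\max(a,b/2)$; comparing with $A_\Sigma(F)\geq a+\frac{2}{d-2}b$ and separating the cases $a\leq b\leq 2a$ (where one uses $b\geq a$ and $d\geq 4$ to get $\frac{4a}{d-2}\leq\frac{da}{d-2}\leq a+\frac{2}{d-2}b$) and $b>2a$ (where $\frac{2b}{d-2}<a+\frac{2b}{d-2}$) yields $\ord_F D\leq A_\Sigma(F)$. The main obstacle is step \emph{(a)}: identifying exactly which $\Aut(\Sigma)$‑invariant divisorial valuations are centred at $\Delta$ and, crucially, proving $\ord_F(z)\geq\ord_F(y)$ — i.e. that such an $F$ cannot be skewed toward a non‑canonical normal direction of $\Delta$. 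This rests on the $\PGL(2,\C)$‑equivariant geometry of $B$ along $\Delta$ (equivalently, on $\widetilde T$ and $Z$ being tangent there and being the only $\Aut(\Sigma)$‑invariant divisors through $\Delta$), and it is precisely what makes the numerical estimate survive the small cases $d=4,5$; everything else — the fiber computation, the vertical/horizontal splitting of $\widetilde D$, and the term‑by‑term bound on $\ord_F\widetilde D_0$ — is routine once this is in place.
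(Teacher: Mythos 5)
Your reduction to the two possible centers $Z$ and $\Delta$, the case $F=Z$, and the closing numerics are fine, but the proposal has a genuine gap exactly where you flag it: step \emph{(a)} is asserted, not proved, and it is the load-bearing part of your argument. Two nontrivial facts are needed there. First, uniqueness of the $\PGL(2,\C)$-equivariant line subbundle of $N_{\Delta/B}$: this bundle is an extension of $N_{Z/B}|_{\Delta}\cong\sO_{\nP^1}(4)$ by $N_{\Delta/Z}\cong\sO_{\nP^1}(2)$, the equivariant $\operatorname{Ext}^1$ is nonzero, and uniqueness amounts to showing the extension does not split equivariantly; nothing in your sketch establishes this. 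Second, and more seriously, even granting uniqueness at the first stage you must control the entire equivariant tower: an invariant $F$ with $c_B(F)=\Delta$ arises from successive blow-ups along invariant curves, and you must show at \emph{every} stage that the only invariant curves dominating $\Delta$ lie where your two monomial frames predict, so that $\ord_F$ really is monomial in $(y,z)$ or $(y,z-y^2)$ with $\ord_F(z)\geq\ord_F(y)$. This is not cosmetic: if an invariant valuation with $\ord_F(y)>\ord_F(z)$ existed (e.g.\ coming from a second invariant section of the exceptional divisor of $\Bl_\Delta B$), your final chain $\ord_F D\leq\tfrac{4}{d-2}\max(a,b/2)\leq A_\Sigma(F)$ breaks down precisely for $d=4,5$, as you yourself observe. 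So as written the proposal does not prove the lemma; it reduces it to an unverified classification of invariant valuations.

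For comparison, the paper's proof avoids any such classification. It also bounds $\ord_\Delta\beta_*^{-1}D$ by restricting to the two invariant surfaces $Z$ and $\widetilde{T}$ (your step \emph{(b)} and the fiber computation are essentially this), but then it splits according to whether the strict transform of $D$ on the blow-up $B_1$ of $\Delta$ contains the curve $\Delta_1=\widetilde{T}_1\cap Z_1\cap E_1$. If it does not, then $\ord_F D$ is bounded by a combination of $\ord_F E_1$ and $\ord_F Z_1$, and the inequality follows from log canonicity of the auxiliary pair $(B_1,\tfrac{4}{d}E_1+Z_1)$; if it does, one gets the sharper bound $\ord_\Delta\beta_*^{-1}D\leq\tfrac{2}{d-2}-\ord_Z D$, so that $\beta_*^{-1}D+(\ord_Z D+\tfrac{d-4}{d-2})Z$ has multiplicity at most $1$ along $\Delta$, hence is log canonical at the generic point of $\Delta$, which yields $\ord_F D\leq A_\Sigma(F)$ for \emph{all} $F$ centered at $\Delta$ simultaneously. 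The only equivariant input beyond Proposition \ref{prop:Aut-orbits} is that $\Delta_1$ is the unique invariant curve on $B_1$ over $\Delta$ --- a one-step statement --- whereas your route requires the full tower classification together with the directional inequality $\ord_F(z)\geq\ord_F(y)$, which is exactly what is missing.
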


\begin{proof}
It suffices to check the assertion when $\Supp (D)$ is irreducible. By Proposition \ref{prop:beta^*T},
$$
\beta^*D \sim_{\Q} \beta^*T \sim_{\Q} \pi^*2A + \frac{2}{d-2}Z,
$$ 
so we may write
$$
\beta^*D = \beta_*^{-1}D + aZ~~\text{for some}~~ 0 \leq a \leq \frac{2}{d-2}.
$$
Then 
$$
\ord_Z D = a \leq \frac{2}{d-2} = 1- \frac{d-4}{d-2} = A_{\Sigma}(Z).
$$
From now on, assume that $F \neq Z$ so that $c_B(F)=\Delta=\nP^1$. Recall that $\widetilde{T}=\beta_*^{-1} T = \nP^1 \times \nP^1$ and $Z=\nP^1 \times \nP^1$ meet along the diagonal $\Delta$ with multiplicity $2$. We have
$$
\beta_*^{-1}D \sim_{\Q} \Big( \frac{4}{d-2}-2a \Big) H + \pi^* a(d-2)A \sim_{\Q} \pi^*2A +  \Big( \frac{2}{d-2}-a \Big)Z,
$$
so we get
$$
(\beta_*^{-1}D )|_Z \sim_{\Q} \Big( \frac{4}{d-2}-2a \Big) H|_Z + a(d-2)\Delta~~\text{ and }~~(\beta_*^{-1}D )|_{\widetilde{T}} \sim_{\Q} \pi_{\widetilde{T}}^* 2A|_{Q} +  \Big( \frac{4}{d-2}-2a \Big) \Delta.
$$
Note that $\sO_Z(H|_Z) = \sO_{\nP^1} \boxtimes \sO_{\nP^1}(d)$ and $\sO_{\widetilde{T}}(\pi_{\widetilde{T}}^* A|_{Q}) = \sO_{\nP^1} \boxtimes \sO_{\nP^1}(2)$. 
Then we have
$$
\ord_{\Delta} (\beta_*^{-1} D)|_Z \leq a(d-2)~~\text{ and }~~ \ord_{\Delta} (\beta_*^{-1} D)|_{\widetilde{T}} \leq \frac{4}{d-2}-2a.
$$
Thus we obtain
$$
\ord_{\Delta} \beta_*^{-1} D \leq  \underbrace{ \min\left\{ a(d-2), \frac{4}{d-2}-2a \right\} \leq \frac{4}{d}}_{\text{the equality holds $\Longleftrightarrow$ $a=\frac{4}{d(d-2)}$}}.
$$

\medskip

Suppose that $\Supp\big((\beta_1 \circ \beta)_*^{-1} D\big)$ does not contain $\Delta_1=\widetilde{T}_1 \cap Z_1 \cap E_1$, which is a unique $\Aut(\Sigma)$-invariant curve on $B_1$. We find
$$
\begin{array}{rcl}
\ord_F D& = & \ord_F \beta_*^{-1}D + a \ord_F Z \\
&=&  \ord_{\Delta} \beta_*^{-1}D \cdot \ord_F E_1+a \ord_F Z\\
&=&  ( \ord_{\Delta} \beta_*^{-1}D  + a) \cdot \ord_F E_1 + a \ord_F Z_1\\
&\leq&\displaystyle \Big( \frac{4}{d} + \frac{2}{d-2} \Big) \ord_F E_1 + \frac{2}{d-2} \ord_F Z_1.
\end{array}
$$
Since
$$
K_{B_1} + \Big( \frac{4}{d} E_1 + Z_1 \Big) = (\beta_1 \circ \beta)^* K_{\Sigma} +  \Big( \frac{4}{d} + \frac{2}{d-2} \Big) E_1 + \frac{2}{d-2} Z_1
$$
and $(B_1, (4/d) E_1 + Z_1)$ is a log canonical pair, it follows that 
$$
 \ord_F D \leq \Big( \frac{4}{d} + \frac{2}{d-2} \Big) \ord_F E_1 + \frac{2}{d-2} \ord_F Z_1 \leq A_{\Sigma}(F).
$$

\medskip

Suppose that $\Supp\big( (\beta_1 \circ \beta)_*^{-1} D\big)$ does contain $\Delta_1$. In this case, notice that
$$
\ord_{\Delta} \beta_*^{-1} D \leq \frac{1}{2} \min\{ \ord_{\Delta} (\beta_*^{-1} D)|_Z,  \ord_{\Delta} (\beta_*^{-1} D)|_{\widetilde{T}}\} \leq \frac{1}{2} \ord_{\Delta} (\beta_*^{-1} D)|_{\widetilde{T}} \leq \frac{2}{d-2}-a.
$$
Then we find
$$
\ord_{\Delta} \Big( \beta_*^{-1}D + aZ+ \frac{d-4}{d-2}Z \Big) \leq \frac{2}{d-2}-a + a + \frac{d-4}{d-2} = 1.
$$
This implies that the log pair 
$$
\left( B, \beta_*^{-1}D + aZ+ \frac{d-4}{d-2} Z \right)
$$
is log canonical at the generic point of $\Delta$. Since
$$
K_B + \beta_*^{-1}D + aZ + \frac{d-4}{d-2} Z = \beta^*(K_{\Sigma}+D),
$$
it follows that $\ord_F D \leq A_{\Sigma}(F)$.
\end{proof}

Finally, we complete the proof of Theorem \ref{thm:main1}.

\begin{proof}[Proof of Theorem \ref{thm:main1}]
 Let $F$ be an $\Aut(\Sigma)$-invariant irreducible divisor over $\Sigma$. In view of Theorem \ref{thm:Zhuang}, we need to check that $S_{\Sigma}(F) < A_{\Sigma}(F)$. Proposition \ref{prop:Aut-orbits} says that $F = T$ or $c_{\Sigma}(F)=C$. In the former case, Lemma \ref{lem:A(T)S(T)} shows $S_{\Sigma}(T) < A_{\Sigma}(T)$. Assume that $c_{\Sigma}(F)=C$. Applying Proposition \ref{prop:Fujita} with $t=(d+2)/2d$ (by Lemma \ref{lem:lct}) and $s=1$ (by Lemma \ref{lem:ord_FD}), we find
$$
S_{\Sigma}(F) \leq \frac{A_{\Sigma}(F)}{4} \left( \frac{2}{1} + \frac{2d}{d+2} \right) = A_{\Sigma}(F) \frac{4d+4}{4(d+2)} < A_{\Sigma}(F).
$$
Thus we finish the proof.
\end{proof}

\begin{remark}\label{rem:alpha}
The original statement of \cite[Proposition 3.2]{Fujita2} replaces Condition (2) in Proposition \ref{prop:Fujita} with the following condition: $\lct(X, D) \geq s$ for any effective $\Q$-divisor $D \sim_{\Q} -K_X$ with $D_0 \not\subseteq \Supp(D)$. To prove Theorem \ref{thm:main1} by applying  the original statement of \cite[Proposition 3.2]{Fujita2}, we need that $\lct(\Sigma, D) \geq 1$ for any effective $\Q$-divisor $D \sim_{\Q} -K_{\Sigma}$ with $T \not\subseteq \Supp(D)$. However, this is not the case. If $\ell$ is a line in $\nP^2$, then $\beta(\pi^{-1}(\ell))$ is a $\Q$-divisor on $\Sigma$ such that
$$
\beta(\pi^{-1}(\ell)) \sim_{\Q} - \frac{1}{2}  K_{\Sigma}~~\text{ and }~~T=\beta(\pi^{-1}(Q)) \not\subseteq \beta(\pi^{-1}(\ell)).
$$
Putting $D:=2\beta(\pi^{-1}(\ell))$, we see that $(\Sigma, cD)$ is not log canonical whenever $c>1/2$. This means that $\lct(\Sigma, D) \leq 1/2$. Thus the required condition for the original statement of \cite[Proposition 3.2]{Fujita2} is not fulfilled. Moreover, this also proves that 
$$
\alpha(\Sigma) \leq \frac{1}{2}.
$$
\end{remark}

\begin{remark}\label{rem:G-alpha}
Recall from \cite[Corollary 1.3]{Zhuang} that if a reductive algebraic group $G$ acting on a Fano variety $X$ of dimension $n$ and $\alpha_G(X) > n/(n+1)$, then $X$ is K-polystable. Lemmas \ref{lem:lct} and \ref{lem:ord_FD} show that 
$$
\alpha_{\Aut(\Sigma)}(\Sigma) = \frac{d+2}{2d}.
$$
However, if $d \geq 4$, then $(d+2)/(2d) \leq 2/3$. Thus one cannot prove K-polystability of $\Sigma$ in general only estimating $\alpha_{\Aut(\Sigma)}(\Sigma)$.
\end{remark}

\section{Anticanonical Polar Cylinders}\label{sec:cyl}
In this section, we prove Theorem \ref{thm:main2}. We keep using the notations introduced in the previous sections. It is enough to show that the secant variety $\Sigma$ of a rational normal curve of degree $d \geq 4$ admits a $(-K_\Sigma)$-polar cylinder. Recall that a \emph{$(-K_X)$-polar cylinder} on a $\Q$-Fano variety $X$ is a Zariski open subset $U:=X \setminus \operatorname{Supp}(D)$ for an effective divisor $D \sim_{\Q} -K_X$ such that $U \cong \C^1 \times Z$ for an affine variety $Z$.

\medskip

The universal family map $\sigma$ is explicitly given by
$$
\sigma \colon \nP^1 \times \nP^1 \longrightarrow \nP^2,~~[s,t] \times [u,v] \longmapsto \Big[su, \frac{1}{2}(tu+sv), tv \Big].
$$
Let $x,y,z$ be coordinates of $\nP^2$ such that 
$$
Q=\sigma(\Delta) = V(y^2-xz).
$$
Let $\ell := V(z)$ be the tangent line to $Q$ at $P=[1,0,0] \in Q$. Note that 
$$
\sigma^{-1}(\ell) = V(v) \cup V(t),
$$ 
where $V(v) = \nP^1 \times \{[1,0]\}$ and $V(t)=\{ [1,0]\} \times \nP^1$. Then
$$
\nP^1 \times \nP^1 \setminus \sigma^{-1}(\ell) = \C^1 \times \C^1 = \C^2
$$
with coordinates $s,u$. Note that $\nP^2 \setminus \ell = \C^2$ with coordinates $x,y$. The restriction map $\sigma|_{\C^2}$ is given by
$$
\sigma|_{\C^2} \colon \C^2 \longrightarrow \C^2,~~(s,u) \longmapsto \Big(su, \frac{1}{2}(s+u) \Big).
$$
Here we may think that $\C^2$ is the Hilbert scheme of two points on $\C^1 = \nP^1 \setminus \{[1,0]\}$. 

\medskip

Now, consider $\nP(E) \setminus \pi^{-1}(\ell) = \nP^1 \times \C^2$ with the projection
$$
\pi|_{\nP^1 \times \C^2} \colon \nP^1 \times \C^2 \longrightarrow \C^2,~~[w_0, w_1] \times (x,y) \longmapsto (x,y).
$$
Recall that $Z \setminus \pi^{-1}(\ell) = \nP^1 \times \nP^1 \setminus \sigma^{-1}(\ell) = \C^2$. The restriction of the inclusion $Z \subseteq \nP(E)$ is given by
$$
\iota \colon \C^2  \longrightarrow \nP^1 \times \C^2,~~(s,u) \longmapsto [s,1] \times \Big(su, \frac{1}{2}(s+u) \Big).
$$
Note that $s,u$ are zeros of the quadratic polynomial $X^2 - (s+u)X + su$ in $X$.
Thus 
$$
\iota(\C^2) = V(w_0^2 - 2y w_0 w_1 + x w_1^2).
$$

\medskip

We are ready to prove Theorem \ref{thm:main2}. We construct an effective $\Q$-divisor $D \sim_{\Q} -K_{\Sigma}$ such that $\Sigma \setminus \Supp(D)$ is a $(-K_{\Sigma})$-polar cylinder isomorphic to $\C^1 \times (\C^1 \times \C^*)$. 

\begin{proof}[Proof of Theorem \ref{thm:main2}]
Notice that 
$$
\beta(\pi^{-1}(\ell)) \sim_{\Q} -\frac{1}{2} K_{\Sigma}~~\text{ and }~~\pi^{-1}(\ell) + \frac{1}{d-2} Z = \beta^*(\beta(\pi^{-1}(\ell)) \sim_{\Q} \beta^* \Big(- \frac{1}{2}  K_{\Sigma} \Big).
$$
There is a nonzero section $H^0(\nP^1, \sO_{\nP^1}(d))$ vanishing at $[1,0] \in \nP^1$ of order $d$. As 
$$
H^0(\nP^1, \sO_{\nP^1}(d)) = H^0(\nP^2, E),
$$ 
this section gives a tautological divisor $H_0$ on $B=\nP(E)$. Then 
$$
\beta(H_0) \sim H \sim_{\Q} -\frac{d-2}{4} K_{\Sigma}.
$$

\medskip

Putting 
$$
D:=\frac{4}{d}(\beta(\pi^{-1}(\ell)) + \beta(H_0)) \sim_{\Q} -K_{\Sigma},
$$
we claim that
$$
\Sigma \setminus \Supp(D) = \C^1 \times (\C^1 \times \C^*).
$$
To this end, note that 
$$
\Sigma \setminus \Supp(D) = \Sigma \setminus (\beta(\pi^{-1}(\ell)) \cup \beta(H_0)) = B \setminus (\pi^{-1}(\ell) \cup Z \cup H_0).
$$
Recall that $B \setminus \pi^{-1}(\ell) = \nP^1 \times \C^2$. Observe that $H_0|_{\nP^1 \times \C^2} = V(w_1)$. Thus 
$$
B \setminus (\pi^{-1}(\ell) \cup H_0) = \C^1 \times \C^2 = \C^3
$$
with coordinates $w_0, x,y$. We have
$$
Z|_{\C^3} = \iota(\C^2)|_{\C^3} = V(w_0^2 - 2yw_0 + x).
$$
Now, consider the following isomorphism
$$
\gamma \colon \C^3 \longrightarrow \C^3,~~(w_0, x,y) \longmapsto (w_0, y, w_0^2-2yw_0 + x),
$$
under which $V(w_0^2 - 2yw_0 + x)$ correspondences to a coordinate plane of $\C^3$ isomorphic to $\C^2$.
Thus we see that
$$
\C^3 \setminus Z|_{\C^3} = \C^3 \setminus \gamma(Z|_{\C^3}) = \C^1 \times \C^1 \times \C^*.
$$
However, 
$$
\C^3 \setminus Z|_{\C^3} = \big( B \setminus (\pi^{-1}(\ell) \cup H_0) \big) \setminus Z = B \setminus (\pi^{-1}(\ell) \cup Z \cup H_0),
$$
so we complete the proof.
\end{proof}

$ $

\end{document}